\newtheorem{theorem}{Theorem}[section]
\newtheorem{proposition}[theorem]{Proposition}
\newtheorem{lemma}[theorem]{Lemma}
\newtheorem*{claim*}{Claim}
\newtheorem{corollary}[theorem]{Corollary}
\newtheorem{Main Conjecture}[theorem]{Main Conjecture}
\newtheorem{conjecture}[theorem]{Conjecture}
\theoremstyle{definition}
\newtheorem{definition}[theorem]{Definition}
\newtheorem{construction}[theorem]{Construction}
\theoremstyle{remark}
\newtheorem{example}[theorem]{Example}
\theoremstyle{plain}
\newcommand{\Mod}[1]{\ (\text{mod}\ #1)}
\renewcommand{\Pr}[1]{\mathrm{Pr}\left[#1\right]}
\newcommand{\sPrg}[2]{\mathrm{Pr}[#1\,\,|\,\,#2]}
\newcommand{\E}[1]{\mathbb{E}\left[#1\right]}
\newcommand{\sEg}[2]{\mathbb{E}[#1\,\,|\,\,#2]}
\newcommand{\fun}[3]{#1 \colon #2 \to #3}
\renewcommand{\dim}[1]{\ensuremath{\mathsf{dim}\!\left(#1\right)}}
\newcommand{\Exp}[1]{\exp\left(#1\right)}
\newcommand{\inv}[1]{\left(#1\right)^{-1}}
\newcommand{\iv}[1]{#1^{-1}}
\let\ga=\alpha
\let\gb=\beta
\let\gc=\gamma
\let\gd=\delta
\let\gf=\varphi
\let\gi=\iota
\let\gn=\nu
\let\gp=\pi
\let\gC=\Gamma
\let\gD=\Delta
\let\gO=\Omega
\newcommand\numberthis{\addtocounter{equation}{1}\tag{\theequation}}
\let\e\varepsilon               
\let\Z\Integer
\def\Floor#1{\left\lfloor #1 \right\rfloor}
\def\Ceil#1{\left\lceil #1 \right\rceil}
\def\Set#1{\left\{ #1 \right\}}
\title{Proper elements of Coxeter groups}
\author{J\'{o}zsef Balogh}
\address{Department of Mathematics, U. of Illinois at Urbana-Champaign, Urbana, IL 61801, USA}
\email{jobal@illinois.edu}
\author{David Brewster}
\address{John A. Paulson School of Engineering and Applied Sciences, Harvard University, Cambridge, MA 02138}
\email{dbrewster@g.harvard.edu}
\author{Reuven Hodges}
\address{Department of Mathematics, U. of Kansas, Lawrence, KS 66045, USA}
\email{rmhodges@ku.edu}
\begin{document}

\begin{abstract}
  We extend the notion of \emph{proper elements} to all finite Coxeter groups.
  For all infinite families of finite Coxeter groups we prove that the probability a
  random element is \emph{proper} goes to zero in the limit. This proves a conjecture of the third author and A.~Yong regarding the proportion of Schubert varieties that are Levi spherical for all infinite families of Weyl groups. We also enumerate the proper elements in the exceptional Coxeter groups.
\end{abstract}

\maketitle
\section{Introduction}

The study of proper elements is motivated by the study of reductive group
actions on Schubert varieties.
Let $G$ be a complex, connected reductive group, with a fixed maximal torus $T$
contained in a fixed Borel subgroup $B$.
The \emph{Weyl group} of $G$ is defined to be $W = N(T) / T$;
it is a finite Coxeter group of rank $n$, where $n$ is the semisimple rank of $G$.
The \emph{flag variety} $G/B$ is an object of central importance in algebraic
geometry and representation theory.
The $B$-orbits in $G/B$ are indexed by $w \in W$, and the Zariski-closure of the $B$-orbit indexed by $w$ is the
\emph{Schubert variety} $X_w$.

The standard Levi subgroups of $G$ are families of reductive subgroups that act on
Schubert varieties in $G/B$.
For each $I \subseteq [n]:=\{1,\ldots,n\}$, there is an associated
standard parabolic subgroup $P_I \supseteq B$.
Each $P_I$ decomposes as a semidirect product \[P_I = L_I \ltimes U_I,\] where
$L_I$ is a reductive group called a standard Levi subgroup and $U_I$ is the unipotent
radical of $P_I$. 

The group $G$ acts on $G/B$ by left multiplication. If $J(w) \subseteq [n]$ is the left descent set
of $w$ (see Definition \ref{def:left-descents}), then ${\sf stab}_G(X_w) = P_{J(w)}$~\cite[Lemma~8.2.3]{BL00}. For any $I \subseteq J(w)$,
$L_I < P_I \leq P_{J(w)}$ and hence $L_I$ is a reductive group that acts on $X_w$ by left multiplication.

A normal variety $X$ is a \emph{spherical variety} for the action of a reductive group $R$ if a Borel
subgroup of $R$ has an open dense orbit in $X$. In \cite{HY20}, the third author and Alexander Yong initiated a study of when a
Schubert variety in $G/B$ is \emph{Levi spherical}; that is, when it is a spherical variety under the left multiplication action of a
standard Levi subgroup of $G$.

In this work we define proper elements of a Coxeter group and show that if $X_w$ is $L_I$-spherical, then $w$ is proper. We then analyze the limiting behavior of properness which yields a proof of ~\cite[Conjecture~3.7]{HY20} for all infinite families of Weyl groups.

\begin{theorem}\label{thm:main2}
  Let $G$ be a simple group with Weyl group $W$ of type $A_n, B_n, C_n,$ or
  $D_n$.
  Let $w$ be sampled uniformly at random from $W$.
  Then as $n\longrightarrow\infty$, \[
    {\rm Pr}[X_w \subseteq G/B \text{\ is $L_{J(w)}$-spherical}]
    \longrightarrow 0.
  \]
\end{theorem}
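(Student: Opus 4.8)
The plan is to reduce the statement entirely to the enumeration of proper elements. By the implication established earlier in the paper---if $X_w$ is $L_I$-spherical then $w$ is proper---applied with $I = J(w)$, the event that $X_w$ is $L_{J(w)}$-spherical is contained in the event that $w$ is proper. Hence, writing $W_n$ for the rank-$n$ group in the given family and $p_n := |\{w \in W_n : w \text{ is proper}\}|$, we obtain
\[
  {\rm Pr}[X_w \subseteq G/B \text{ is } L_{J(w)}\text{-spherical}] \;\le\; \frac{p_n}{|W_n|}.
\]
So it suffices to prove $p_n/|W_n| \to 0$ for each of the four families, where $|W_n|$ equals $(n+1)!$ in type $A_n$, $2^n\, n!$ in types $B_n$ and $C_n$, and $2^{n-1}\, n!$ in type $D_n$. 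Since $B_n$ and $C_n$ share the hyperoctahedral group as Weyl group, these two are handled simultaneously, and type $D_n$ can be treated by leveraging its relationship to the type $B_n$ group.

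Next I would translate the definition of properness into an explicit combinatorial constraint on $w$ (in terms of its one-line notation, equivalently its left descent composition) and use it to produce a tractable upper bound for $p_n$. The natural route is to encode proper elements so that they are generated by a controlled number of choices: set up a recursion for $p_n$, or for a refinement recording the relevant descent data, and pass to generating functions, with the goal of showing that the growth rate of $p_n$ is strictly smaller than that of $|W_n|$---more precisely that $p_n = o(|W_n|)$. In type $A$ this amounts to viewing $w \in S_{n+1}$ and arguing that properness forces the inversions of $w$ into a structured region, so the number of degrees of freedom is too small to reach a positive fraction of $(n+1)!$. An alternative, sometimes cleaner, route is to isolate a purely local necessary condition for properness and show that a uniformly random $w$ violates it with probability tending to $1$.

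The main obstacle is obtaining a bound on $p_n$ tight enough to beat factorial growth: a naive count of elements satisfying the proper condition will typically still be factorial, so the argument must exploit that properness simultaneously restricts both the descent set and the fine structure within each descent block. I expect the crux to be a uniform estimate---via singularity analysis of the generating function, or a direct injection into a provably smaller family---showing that the ratio $p_n/|W_n|$ decays to $0$. Verifying that the same estimate carries over to type $D_n$, where the admissible descent patterns differ from those in types $B_n/C_n$, is the remaining technical point; once the type $B/C$ count is in hand, the type $D$ case should follow by an analogous computation with the modified initial data.
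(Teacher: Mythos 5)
Your first step is exactly the paper's proof of this theorem: by Proposition~\ref{prop:geosphericalimpliesproper} applied with $I=J(w)$, the event that $X_w$ is $L_{J(w)}$-spherical is contained in the event that $w$ is proper, so the probability in question is at most the proportion of proper elements, and it suffices to show that proportion vanishes. That reduction is correct and is all the paper itself does at this point, because the remaining claim is precisely Theorem~\ref{thm:main1}, proved separately in Section~\ref{section:5}. Had you simply invoked Theorem~\ref{thm:main1} (it is a stated result of the paper), your argument would be complete and identical to the paper's.

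Instead you take on the burden of proving the vanishing proportion yourself, and here there is a genuine gap: your plan (a recursion or generating function for the number $p_n$ of proper elements, with singularity analysis, or an unspecified ``local necessary condition'') never identifies the quantitative mechanism that makes the claim true, and neither proposed route is workably specified. Properness is the global inequality $\ell(w)\leq n+{\sf maxw}_0(W,d(w))$, coupling length to a nonlinear function of the descent number; this coupling is exactly why an exact enumeration or a tractable generating function for $p_n$ is not available, and a ``degrees of freedom'' heuristic cannot by itself exclude a positive proportion. What the paper actually uses is concentration of measure: (a) $d(w)$ concentrates near $(n-1)/2$, via a Chernoff bound (Lemmas~\ref{lemma:chernoff} and~\ref{lemma:1}) applied to descent indicators split into two independent families; (b) $\ell(w)$ concentrates near its mean, via Azuma--Hoeffding (Lemma~\ref{lemma:2}) applied to the martingale obtained by building a uniform (signed) permutation through successive insertions; and (c) crucially, there is a $\Theta(n^2)$ gap between the typical length and the properness threshold at the typical descent count --- in type $A$, $\mathbb{E}[\ell(w)]=\tfrac12\binom{n}{2}\approx n^2/4$ while $n+\binom{d+1}{2}\approx n^2/8$ at $d\approx n/2$, and similarly in types $B$ and $D$ using Proposition~\ref{prop:maxwAnalysis}. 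Lemma~\ref{lemma:3} then combines the two tail bounds to show properness is a large-deviation event of probability $e^{-\Omega(n)}$. Your proposal contains neither the expectation comparison (c) nor any substitute for the concentration statements (a) and (b), so the crux of the theorem remains unproved; your ``violated with high probability'' alternative is the right intuition, but the condition involved is global, not local, and realizing it requires precisely these concentration estimates.
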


\subsection{Proper elements in finite Coxeter groups}

An $n\times n$ matrix $M$ is a \emph{Coxeter matrix} if it is a symmetric matrix with entries in $\{1,2,\ldots,\infty\}$ such that $M_{ij}=1$ if and only if $i=j$.
The $\emph{Coxeter group}$ associated to $M$ is the group
\[
W = \langle s_1,\ldots,s_n : (s_i s_j)^{M_{ij}} = e\text{ for all }M_{ij}\neq \infty \rangle.
\]
The matrix $M$ is visualized by the \emph{Coxeter diagram} ${\mathcal G}$, a graph whose nodes are labeled by
$[n]$, and nodes $i$ and $j$ are connected by an edge labeled by $M_{ij}$ if $M_{ij} \geq 3$. By convention, the label on edges with $M_{ij} = 3$ is omitted. See Figure~\ref{fig:coxG} below for examples.

Let $S = \{ s_1, \ldots, s_n \}$. The pair $(W,S)$ is called a \emph{Coxeter system}. If $W$ is a finite group, then $(W,S)$ is \emph{finite}; $(W,S)$ is \emph{irreducible} if ${\mathcal G}$ is connected. The finite Coxeter groups were classified in ~\cite{C35}. The irreducible, finite Coxeter groups consist of four infinite families $A_n, B_n, D_n,$ and $I_2(n)$ as well as 6 exceptional groups: $E_6, E_7, E_8, F_4, H_3,$ and $H_4$. The \emph{Coxeter length}, $\ell(w)$, of $w \in W$ is equal to the minimal number of elements of $S$ required to express $w$.
There is a unique element of maximal length in $W$ denoted by $w_0(W)$.

We index the nodes in the Coxeter diagram ${\mathcal G}$ by $[n]$.
For $I \subseteq [n]$, let ${\mathcal G}_I$ be the induced subdiagram of
${\mathcal G}$.
There is a decomposition of ${\mathcal G}_I$ into $m$ connected components
\begin{equation}\label{eqn:thedecomp}
  {\mathcal G}_I=\bigcup_{z=1}^m {\mathcal C}^{(z)}.
\end{equation}
where each ${\mathcal C}^{(z)}$ is a Coxeter diagram with associated Coxeter group $W^{(z)}$.
Let $W_I$ be the parabolic subgroup of $W$ generated by
$S_I := \{s_i \mid i \in I\}$.
Then ${\mathcal G}_I$ is the Coxeter diagram of $W_I$, and
\begin{equation}\label{eqn:thelength}
  \ell(w_0(W_I)) = \sum_{z=1}^m \ell(w_0(W^{(z)})).
\end{equation}

\begin{definition}\label{def:left-descents}
  For an element $w\in W$, the set of \emph{left descents} is\[
    J(w) := \{j \in [n]  \mid \ell(s_j w) < \ell(w) \}.
  \]
\end{definition}
The number of left descents will be denoted by $d(w) := |J(w)|$. For a nonnegative integer $x\!\leq\!n$ define ${\sf maxw}_0(W, x)\!:=\!\max\{\ell(w_0(W_I)) \mid I\!\subseteq\![n]\text{ and }|I|\!=\!x\}$.

\begin{definition}
\label{def:main}
  An element $w \in W$ is \emph{proper} if $\ell(w) \leq n + {\sf maxw}_0(W, d(w))$.
\end{definition}
\begin{example}
Let $W$ be the $B_3$ Coxeter group with $w = s_3 s_2 s_3 s_1 s_2 s_3 s_1 \in W$. Then $J(w) = \{ 2,3\}$ and $d(w)=2$. For $I = \{2,3 \} \subseteq [3]$, $W_I$ is the $B_2$ Coxeter group with $\ell(w_0(W_I))=4$. This $I$ achieves the maximum possible value for $\ell(w_0(W_I))$ over all $I\subseteq [3]$ with$|I|=d(w)=2$. Hence, ${\sf maxw}_0(W, d(w))=4$. We conclude \[\ell(w)=7 \leq 3 + 4 = n + {\sf maxw}_0(W, d(w)),\] and so $w$ is proper.
\end{example}
\begin{example}
Let $W$ be the $A_4$ Coxeter group with $w = s_2 s_3 s_4 s_1 s_2 s_3 s_1 s_2 \in W$. Then $J(w) = \{ 2,3\}$, $d(w)=2$, and ${\sf maxw}_0(W, d(w))=3$ by Proposition~\ref{prop:maxwAnalysis} below. Hence, \[ \ell(w)=8 \nleq 4 + 3 = n + {\sf maxw}_0(W, d(w)), \] and so $w$ is not proper.
\end{example}

We analyze the limiting behavior of this property.

\begin{theorem}\label{thm:main1}
  Let $W$ be a Coxeter group of type $A_n$, $B_n$, $D_n$, or $I_2(n)$.
  Let $w$ be sampled uniformly at random from $W$.
  Then as $n\longrightarrow\infty$, \[
    {\rm Pr}[w \text{\ is proper}]\longrightarrow 0.
  \]
\end{theorem}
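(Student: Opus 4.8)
For the three families of unbounded rank ($A_n,B_n,D_n$) the plan is to show that a uniformly random $w$ is, with probability tending to $1$, too long to be proper, by combining an upper tail bound on the number of descents with a lower tail bound on the length. First I would evaluate ${\sf maxw}_0$. By \eqref{eqn:thelength} the maximum defining ${\sf maxw}_0(W,x)$ is attained on a single connected induced subdiagram, so comparing the longest-element lengths of the eligible types ($\binom{x+1}{2}$ for $A_x$, $x^2$ for $B_x$, $x^2-x$ for $D_x$) gives
\[
  {\sf maxw}_0(A_n,x)=\binom{x+1}{2},\qquad {\sf maxw}_0(B_n,x)=x^2,\qquad {\sf maxw}_0(D_n,x)=x^2-x
\]
(the last for $x\ge4$; smaller $x$ is irrelevant below). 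The decisive point is that ${\sf maxw}_0(W,x)\sim c\,x^2$ for a constant $c$ ($c=\tfrac12$ for $A_n$, $c=1$ for $B_n,D_n$); since $\ell(w_0(W))={\sf maxw}_0(W,n)\sim c\,n^2$, the typical length $\tfrac12\ell(w_0(W))\sim\tfrac{c}{2}n^2$ is asymptotically \emph{twice} the threshold ${\sf maxw}_0(W,\tfrac n2)\sim\tfrac{c}{4}n^2$ coming from the typical descent count. To exploit this, note ${\sf maxw}_0(W,\cdot)$ is nondecreasing, so on the event $d(w)\le(\tfrac12+\epsilon)n$ properness forces $\ell(w)\le n+{\sf maxw}_0(W,(\tfrac12+\epsilon)n)$; a union bound then gives
\[
  \Pr{w\text{ is proper}}\le \Pr{d(w)>(\tfrac12+\epsilon)n}+\Pr{\ell(w)\le n+{\sf maxw}_0(W,(\tfrac12+\epsilon)n)}.
\]
I would fix $\epsilon$ with $(\tfrac12+\epsilon)^2<\tfrac12$ (e.g.\ $\epsilon=\tfrac18$), so that $n+{\sf maxw}_0(W,(\tfrac12+\epsilon)n)\le(1-\delta)\E{\ell(w)}$ for some fixed $\delta>0$, the left side being $\sim c(\tfrac12+\epsilon)^2n^2$ and $\E{\ell(w)}\sim\tfrac{c}{2}n^2$.

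The length term is the easy one. For each of $A_n,B_n,D_n$ the Poincar\'e polynomial factors as a product of $q$-integers $\prod_i[d_i]_q$ (with degrees $2,\dots,n+1$ in type $A_n$, $2,4,\dots,2n$ in type $B_n$, and $2,4,\dots,2n-2$ together with $n$ in type $D_n$), so the probability generating function of $\ell(w)$ factors and exhibits $\ell(w)$ as a sum of independent variables, the $i$-th uniform on $\{0,1,\dots,d_i-1\}$. Hence $\E{\ell(w)}=\tfrac12\ell(w_0(W))=\Theta(n^2)$ and $\V{\ell(w)}=\sum_i\tfrac{d_i^2-1}{12}=O(n^3)$, so the standard deviation $O(n^{3/2})$ is $o(\E{\ell(w)})$ and Chebyshev gives $\Pr{\ell(w)\le(1-\delta)\E{\ell(w)}}\to0$. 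This kills the second term.

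The descent term is the crux. Here $\E{d(w)}=\tfrac n2$ exactly: for each node $i$ the involution $w\mapsto s_iw$ is fixed-point-free and pairs each $w$ with an element of opposite status, so $\Pr{i\in J(w)}=\tfrac12$, and summing over nodes gives the mean. Markov's inequality alone, however, only yields a constant (not $o(1)$) upper-tail bound, so I must control the variance and apply Chebyshev; it suffices to prove $\V{d(w)}=o(n^2)$, and I would establish the stronger $\V{d(w)}=O(n)$. Passing to right descents (equidistributed via $w\mapsto w^{-1}$), the descent at node $i$ is the sign condition on $w\alpha_i$, which in the one-line (signed) permutation models of $A_n$, $B_n$, $D_n$ depends only on a bounded window of coordinates. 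Consequently the descent indicators form a finite-range dependent sequence: in type $A_n$ the indicators $\mathbf{1}[w(i)>w(i+1)]$ are $1$-dependent, so $\mathrm{Cov}$ vanishes for non-adjacent pairs and $\V{d(w)}=O(n)$, and the signed analogues in types $B_n,D_n$ behave the same way. Then $\Pr{d(w)>(\tfrac12+\epsilon)n}=O\!\big(1/(\epsilon^2n)\big)\to0$, killing the first term; verifying this finite-range dependence for the signed models of $B_n$ and $D_n$ is the main technical obstacle.

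Finally, $I_2(n)$ must be handled separately, since its rank is fixed at $2$ while the parameter $n$ is the bond label. Here ${\sf maxw}_0(I_2(n),0)=0$, ${\sf maxw}_0(I_2(n),1)=1$, and ${\sf maxw}_0(I_2(n),2)=n$. Every element other than $e$ and $w_0$ has a unique alternating reduced word and hence exactly one left descent, so $d(w)=1$ and properness reduces to $\ell(w)\le3$; together with $e$ (proper) and $w_0$ (proper, as $\ell(w_0)=n\le n+2$) this gives exactly $8$ proper elements once $n\ge4$, whence $\Pr{w\text{ is proper}}=8/(2n)=4/n\to0$.
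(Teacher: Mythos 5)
Your proposal is correct, but its technical core is genuinely different from the paper's. The skeleton is shared: the paper also reduces properness to a descent upper tail plus a length lower tail combined by a union bound (its Lemma~\ref{lemma:3} is exactly your splitting device), it computes ${\sf maxw}_0$ just as in your first paragraph (Proposition~\ref{prop:maxwAnalysis}), and its treatment of $I_2(n)$ (Proposition~\ref{prop:asymptoticI}) coincides with yours. The differences are in the concentration tools. For the length lower tail the paper runs a sequential-insertion process and applies Azuma--Hoeffding (Lemma~\ref{lemma:2}), whereas you invoke the factorization of the Poincar\'e polynomial $\prod_i[d_i]_q$ to realize $\ell(w)$ as an exact sum of independent uniform variables and then apply Chebyshev. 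For the descent upper tail the paper splits the descent indicators into two mutually independent families (odd and even positions) and applies a Chernoff bound (Lemmas~\ref{lemma:chernoff} and~\ref{lemma:1}), whereas you use $1$-dependence plus a variance bound and Chebyshev. The paper also consolidates types $B$ and $D$ into a single count on $S_n^B$ involving $\mathrm{inv}_D$ and $\mathrm{des}$ (Proposition~\ref{prop:snb}), while you treat each group separately with its own Poincar\'e polynomial and descent statistics. As for what each buys: your route is more elementary and algebraically exact, and the finite-range-dependence step you flag for types $B$ and $D$ does go through (generate a signed permutation as an unsigned permutation together with signs; descent indicators on disjoint windows are functions of independent relative orders and independent signs, and the even-parity conditioning in type $D$ is harmless because any proper subset of the signs remains i.i.d.). However, Chebyshev only yields an $O(1/n)$ bound on the proportion of proper elements, whereas the paper's Chernoff/Azuma machinery yields $e^{-\gO(n)}$; indeed the paper explicitly frames its contribution as improving the Chebyshev-based bounds of \cite{BHY20}, so your argument is closer in spirit to that earlier work, extended to types $B$ and $D$. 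Note that your length decomposition would actually recover an exponential bound for the length term via Hoeffding's inequality applied to the independent uniform summands; only your descent estimate is intrinsically polynomial, and even there a Chernoff bound on the odd/even split (as in the paper) would restore exponential decay.
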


In Proposition~\ref{prop:geosphericalimpliesproper}, it is shown that if $X_w \subseteq G/B$ is $L_I$-spherical for $I \subseteq J(w)$, then $w$ is proper. Hence, Theorem~\ref{thm:main1} implies Theorem~\ref{thm:main2}.

Section \ref{sec:exceptional} enumerates the proper elements for $W$ an
exceptional finite Coxeter group. In Theorem~\ref{thm:lowerBound}, we give a non-trivial lower bound on the number of proper elements in Coxeter groups of type $A_n$, $B_n$, and $D_n$.

\subsection{Classifying Levi-spherical Schubert varieties} A type-independent classification of Levi-spherical Schubert varieties was
conjectured in~\cite[Conjecture 1.9]{HY20} by the third author and A.~Yong. This proposed classification is further motivated by its connection to the theory of \emph{Demazure characters}, or \emph{key polynomials}, and the study of their ``split-symmetry''~\cite[\S 4.1]{HY20}. 

The proposed classification of Levi-spherical Schubert varieties in \cite{HY20}
is in terms of \emph{spherical elements}.
A \emph{reduced expression} of $w\in W$ is a word
$s_{i_1} \ldots s_{i_{\ell(w)}} = w$.
Denote the set of reduced words of $w$ by
${\text{Red}}(w):={\text{Red}}_{(W,S)}(w)$.

\begin{definition}[$I$-spherical elements]\label{def:sphricalCoxeter}
  Let $w\in W$ and fix $I\subseteq J(w)$. Then $w$ is \emph{$I$-spherical} if
  there exists $s_{i_1}\cdots s_{i_{\ell(w)}}\in {\text{Red}}(w)$ such that:
  \begin{enumerate}[label=(\textbf{S.\arabic*}),ref=S.\arabic*]
    \item \label{S.1} $\#\{t: i_t= j\}\leq 1$ for all $j \in [n]- I$, and
    \item \label{S.2} $\#\{t: i_t \in {\mathcal C}^{(z)}\} \leq \ell(w_0({W^{(z)}})) +\#\text{vertices}({\mathcal C}^{(z)})$
      for $1\leq z\leq m$.
  \end{enumerate}
\end{definition}

\begin{example}
Let $W$ be the $E_7$ Coxeter group. The $E_7$ Coxeter diagram is
\[ \dynkin[label, edge length=0.5cm]E7. \] Let $w = s_4 s_2 s_3 s_4 s_3 s_2 s_5 s_7 s_4 s_3 s_2 s_4 s_1 s_5 s_3 \in W$. Then $J(w)=\{2,3,4,5, 7\}$. If $I=J(w)$ then 
\[{\mathcal C}^{(1)}=\dynkin[labels={3,4,2,5}, edge length=0.5cm]D4\text{ and  }{\mathcal C}^{(2)}=\dynkin[labels={7}, edge length=0.5cm]A1.\]
Here $W^{(1)}$ is the $D_4$ Coxeter group with longest element $w_0(W^{(1)})=s_3 s_2 s_4 s_3 s_2 s_4 s_5 s_4 s_3 s_2 s_4 s_5$ and $\ell(w_0(W^{(1)}))=12$. On the other hand, $W_{I^{(2)}}$ is the $A_1$ Coxeter group with longest element
$w_0(W^{(2)})=s_7$ and $\ell(w_0(W^{(2)}))=1$. The reduced word given to define $w$ satisfies (S.1) since $s_1$ appears once and $s_6$ appears zero times. This reduced word also satisfies (S.2) since $s_2, s_3, s_4,$ and $s_5$ appear 13 times, which is less than $12 + 4 = 16$ and $s_7$ appears 1 time, which is less than $1 + 1 = 2$. Thus $w$ is $I$-spherical.

Proposition~\ref{prop:coxsphericalimpliesproper} below implies that since $w$ is $I$-spherical it is also proper. This is easily verified to be the case. We have $\ell(w)=15$. And this is less than $n + {\sf maxw}_0(W, d(w)) = 7 + {\sf maxw}_0(W, 5) = 27$, where the final equality follows from Proposition~\ref{prop:maxwAnalysis} below.
\end{example}
\begin{example}
We also give an example of an element that is not $I$-spherical. Let $W$ be the $B_3$ Coxeter group. The Coxeter diagram is \dynkin[Coxeter,label, edge length=0.5cm]B3. The element $w = s_3 s_2 s_3 s_1 s_2 s_3 \in W$ has $J(w)=\{3 \}$. Let $I=J(w)$. This $w$ has only two reduced words, $s_3 s_2 s_3 s_1 s_2 s_3$ and $s_3 s_2 s_1 s_3 s_2 s_3$. Each of these reduced words fail (S.1); in both, $s_2$ appears $2$ times which is greater than 1. Thus $w$ is not $I$-spherical. 
\end{example}

\begin{conjecture}[{\cite{HY20}}]\label{conj:main}
  Let $I\subseteq J(w)$.
  $X_w$ is ${L}_I$-spherical if any only if $w$ is $I$-spherical.
\end{conjecture}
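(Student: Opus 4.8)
The plan is to route the biconditional through the standard dictionary between sphericality and multiplicity-free representations. By the Vinberg--Kimelfeld criterion, the normal projective variety $X_w$, equipped with the left $L_I$-action and the restriction of a $G$-linearized very ample line bundle $\mathcal{L}_\lambda$ from $G/B$, is $L_I$-spherical if and only if $H^0(X_w,\mathcal{L}_\lambda)$ is a multiplicity-free $L_I$-module for every dominant weight $\lambda$. Since $H^0(X_w,\mathcal{L}_\lambda)^{*}$ is the Demazure module whose formal character is the key polynomial $\kappa_w(\lambda)$, the conjecture becomes the purely representation-theoretic assertion that this family of Demazure modules is simultaneously $L_I$-multiplicity-free precisely when some reduced word for $w$ satisfies (\ref{S.1}) and (\ref{S.2}). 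This is exactly the setting of the ``split-symmetry'' of key polynomials from \cite[\S4]{HY20}, and it is the translation I would adopt for both directions.

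For sufficiency (a good word $\Rightarrow$ spherical), I would argue that conditions (\ref{S.1}) and (\ref{S.2}) force the key polynomial to factor, along the chosen reduced word, into a product of characters indexed by the components $\mathcal{C}^{(z)}$ in which no irreducible $L_I$-constituent is repeated, yielding multiplicity-freeness directly. The numerics are consistent: (\ref{S.1}) uses each generator outside $I$ at most once, while (\ref{S.2}) caps the number of factors drawn from $\mathcal{C}^{(z)}$ by $\ell(w_0(W^{(z)}))+\#\text{vertices}(\mathcal{C}^{(z)})$, the dimension of a Borel subgroup of the Levi factor attached to $\mathcal{C}^{(z)}$; summing these bounds over the components and over the generators outside $I$ returns $\ell(w)\le n+\ell(w_0(W_I))$, the dimension inequality already isolated as properness in Proposition~\ref{prop:geosphericalimpliesproper}. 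The geometric counterpart, which I would keep as a cross-check, is that the Bott--Samelson--Demazure--Hansen resolution attached to the good word admits a dense $B_{L_I}$-orbit of full dimension $\ell(w)$.

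Necessity (spherical $\Rightarrow$ a good word exists) is where I expect the real difficulty, and I would attack it by contraposition: assuming that \emph{every} reduced word of $w$ violates (\ref{S.1}) or (\ref{S.2}), I would produce a dominant $\lambda$ and an $L_I$-irreducible occurring in $H^0(X_w,\mathcal{L}_\lambda)$ with multiplicity at least two. A forced repetition of some $s_j$ with $j\notin I$, or an excess of factors in a single component beyond the Borel dimension $\ell(w_0(W^{(z)}))+\#\text{vertices}(\mathcal{C}^{(z)})$, should be converted into such a repeated constituent by localizing to the sub-Schubert variety governed by the offending component and computing the $W^{(z)}$-branching of the Demazure module there.

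The main obstacle is precisely this last multiplicity computation: one must bound $L_I$-branching multiplicities \emph{uniformly} over all reduced words and all dominant $\lambda$, and show that the combinatorial conditions (\ref{S.1})--(\ref{S.2}) are the \emph{only} obstructions to multiplicity-freeness. Unlike the dimension count underlying properness, which is insensitive to the choice of reduced word, multiplicity-freeness is a delicate property that can fail for geometric rather than purely combinatorial reasons, so the crux is to rule out any further, possibly type-dependent, source of a repeated constituent. I would attempt this by combining the split-symmetry machinery of \cite[\S4]{HY20} with a pattern reduction to the rank-one and rank-two Levi factors, where the branching is explicit, and then verifying the exceptional types by direct computation; establishing that no additional obstruction arises in higher rank is the step I regard as genuinely open.
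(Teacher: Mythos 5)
There is no ``paper proof'' to compare your attempt against: the statement is recorded in the paper as Conjecture~\ref{conj:main}, quoted from \cite{HY20}, and the paper explicitly notes that only the type $A$ case has been proved, in the separate work \cite{GHY21}. What the paper itself establishes are two strictly weaker one-way implications: $X_w$ being $L_I$-spherical implies $w$ is proper (Proposition~\ref{prop:geosphericalimpliesproper}), and $w$ being $I$-spherical implies $w$ is proper (Proposition~\ref{prop:coxsphericalimpliesproper}). Both are pure dimension counts, and properness is only a necessary condition for either notion of sphericality --- indeed the whole point of the paper is that proper elements are themselves asymptotically rare, which says nothing about the equivalence you are after.

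Judged on its own terms, your proposal has a genuine gap in both directions, and you concede the decisive one yourself. For sufficiency, the claim that \eqref{S.1} and \eqref{S.2} force $H^0(X_w,\mathcal{L}_\lambda)$ to be a multiplicity-free $L_I$-module is asserted, not proved; the ``numerical consistency'' you check only recovers the inequality $\ell(w)\le n+\ell(w_0(W_{J(w)}))$, i.e., exactly the properness conclusion of Proposition~\ref{prop:geosphericalimpliesproper}, and a dimension count cannot detect multiplicity-freeness (if it could, properness would already imply sphericality, which is false --- otherwise the conjecture would be trivial and the paper's distinction between proper and spherical elements vacuous). For necessity, the step of converting a violation of \eqref{S.1} or \eqref{S.2} in \emph{every} reduced word into a repeated $L_I$-constituent of some $H^0(X_w,\mathcal{L}_\lambda)$, uniformly in type, is precisely the open content of the conjecture; labeling it ``the step I regard as genuinely open'' is an accurate self-assessment, but it means the proposal is a research outline rather than a proof. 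Even in type $A$, where the result is known, the argument of \cite{GHY21} required substantial new combinatorics of key polynomials beyond the Vinberg--Kimelfeld translation you set up, so the translation alone should not be expected to close the gap.
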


The third author, joint with Y.~Gao and A.~Yong, proved Conjecture~\ref{conj:main} in type $A_n$~\cite{GHY21}. This was then used by C.~Gaetz in \cite{G21} to prove \cite[Conjecture 3.8]{HY20}, giving a pattern avoidance criterion for a Schubert variety to be Levi-spherical in type $A_n$. The pattern avoidance criterion, in combination with the Marcus-Tardos theorem, implies Theorem~\ref{thm:main2} in type $A_n$. It is an open question if a pattern avoidance criterion exists for a Schubert variety to be Levi-Spherical in types $B_n$, $C_n$, and $D_n$. 
\begin{theorem}\label{thm:main3}
  Let $W$ be a Coxeter group of type $A_n$, $B_n$, $D_n$, or $I_2(n)$.
  Let $w$ be sampled uniformly at random from $W$.
  Then as $n\longrightarrow\infty$, \[
    {\rm Pr}[w\text{\ is $J(w)$-spherical}]
    \longrightarrow 0.
  \]
\end{theorem}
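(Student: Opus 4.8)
The plan is to reduce Theorem~\ref{thm:main3} to Theorem~\ref{thm:main1} by proving that every $J(w)$-spherical element is proper. Granting this, for each of the types $A_n, B_n, D_n, I_2(n)$ we obtain
\[
  {\rm Pr}[w \text{ is } J(w)\text{-spherical}] \;\le\; {\rm Pr}[w \text{ is proper}] \;\longrightarrow\; 0,
\]
so the conclusion follows immediately from Theorem~\ref{thm:main1}.

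The core of the argument is the implication that $J(w)$-sphericity forces the length inequality of Definition~\ref{def:main}, which I would prove by a direct count of letters in a suitable reduced word. Fix $w$, put $I := J(w)$, and recall $d(w) = |I|$. Assuming $w$ is $I$-spherical, choose a reduced word $s_{i_1}\cdots s_{i_{\ell(w)}}$ satisfying conditions \eqref{S.1} and \eqref{S.2} of Definition~\ref{def:sphricalCoxeter}. I split the positions $\{1,\dots,\ell(w)\}$ according to whether $i_t \in [n] - I$ or $i_t \in I$. Condition \eqref{S.1} says each index of $[n]-I$ is used at most once, so the first block contributes at most $|[n]-I| = n - d(w)$ letters. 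For the second block, the vertex sets of the connected components $\mathcal{C}^{(1)},\dots,\mathcal{C}^{(m)}$ of $\mathcal{G}_I$ partition $I$, so summing \eqref{S.2} over $z$ gives
\[
  \#\{t : i_t \in I\} \;\le\; \sum_{z=1}^m \Big( \ell(w_0(W^{(z)})) + \#\text{vertices}(\mathcal{C}^{(z)}) \Big) \;=\; \ell(w_0(W_I)) + d(w),
\]
where the equality uses \eqref{eqn:thelength} together with $\sum_z \#\text{vertices}(\mathcal{C}^{(z)}) = |I| = d(w)$.

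Adding the two bounds yields $\ell(w) \le (n - d(w)) + \ell(w_0(W_I)) + d(w) = n + \ell(w_0(W_{J(w)}))$. Because $|J(w)| = d(w)$, the definition of ${\sf maxw}_0$ gives $\ell(w_0(W_{J(w)})) \le {\sf maxw}_0(W, d(w))$, and hence $\ell(w) \le n + {\sf maxw}_0(W, d(w))$; that is, $w$ is proper.

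I do not anticipate a serious obstacle in this reduction: the probabilistic content is supplied entirely by Theorem~\ref{thm:main1}, and the implication above is essentially definitional. The only place demanding care is the bookkeeping in the length count—specifically, recognizing that the components $\mathcal{C}^{(z)}$ of $\mathcal{G}_{J(w)}$ partition the index set $J(w)$, so that their vertex counts sum to exactly $d(w)$, and invoking \eqref{eqn:thelength} to replace $\sum_z \ell(w_0(W^{(z)}))$ by $\ell(w_0(W_{J(w)}))$.
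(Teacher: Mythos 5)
Your proposal is correct and follows essentially the same route as the paper: the paper proves Proposition~\ref{prop:coxsphericalimpliesproper} (that $I$-spherical implies proper) by exactly your letter-counting argument---at most $n-d(w)$ letters outside $J(w)$ by \eqref{S.1}, at most $d(w)+\ell(w_0(W_{J(w)}))$ letters inside $J(w)$ by summing \eqref{S.2} over components and using \eqref{eqn:thelength}---and then deduces Theorem~\ref{thm:main3} from Theorem~\ref{thm:main1} via the same probability comparison. The only difference is that the paper states the proposition for arbitrary $I\subseteq J(w)$ and first reduces to the $J(w)$-spherical case by citing~\cite[Proposition 2.12]{HY20}, a step you avoid by working with $I=J(w)$ from the start, which is all the theorem requires.
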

Proposition~\ref{prop:coxsphericalimpliesproper} below shows that $w \in W$ is $I$-spherical implies $w$ is proper. Hence Theorem \ref{thm:main3} follows from Theorem \ref{thm:main1}.

\subsection{Previous results on proper permutations}
\label{subsec:prev}
Proper permutations were first introduced in \cite{BHY20}. We highlight that our definition differs slightly from the original definition. The original definition of properness was motivated by the study of Levi-spherical Schubert varieties in $GL_n/B$. To study Levi-spherical Schubert varieties in $G/B$, for $G$ a simple Lie group, requires a definition of properness that corresponds to Levi-spherical Schubert varieties in $SL_n/B$ ($SL_n$ being a simple Lie group). This introduces a difference of $1$ on the right hand side of Definition~\ref{def:main} as compared to \cite[Definition 1]{BHY20}; this is due to the fact that the Levi subgroups in $SL_n$ have dimension one less than the corresponding Levi subgroups in $GL_n$. This updated definition is considerably more natural in the general type setting. 

The upper bound achieved for the number of proper permutations in \cite{BHY20} uses Chebyshev's inequality.
In this paper, we apply Chernoff bounds to achieve much tighter bounds, exponentially better than those in \cite{BHY20}.
We apply these techniques not only in type $A_n$, but also in types $B_n$ and $D_n$.

We now describe the layout of this paper. In Section \ref{sec:prep} explicit formulas for ${\sf maxw}_0(W, x)$ are given for each type. Next, it is shown that for an element $w$ in a Weyl group $W$, $X_w$ being $L_I$-spherical implies that $w$ is proper. And for $W$ a finite Coxeter group with $w \in W$, $w$ is $I$-spherical implies that $w$ is proper. Hence Theorem~\ref{thm:main1} implies Theorem~\ref{thm:main2} and Theorem~\ref{thm:main3}. In Section \ref{sec:prob} we derive concentration bounds that will be used to bound the number of proper elements in Coxeter groups of type $A_n$, $B_n$, and $D_n$. Section \ref{sec:combmodels} recalls several well-known combinatorial models for the Coxeter groups of type $A_n$, $B_n$, and $D_n$. Theorem~\ref{thm:main1} is proved in Section \ref{section:5} by giving asymptotic bounds on the number of proper elements. In Section~\ref{sec:lowerbounds} we give nontrivial lower bounds for the number of proper elements in Coxeter groups of type $A_n$, $B_n$, and $D_n$. We conclude with a table, presented in Section~\ref{sec:exceptional}, enumerating the number of proper elements in each of the exceptional finite Coxeter groups.

\section{Preparation}
\label{sec:prep}

\subsection{An analysis of ${\sf maxw}_0(W, x)$}
\begin{figure}\renewcommand{\arraystretch}{1.55}\begin{tabular}{rrlll}
Type && Diagram & Length of longest element & Number of elements \\
\hline
$A_n$ & \qquad & \dynkin[Coxeter]{A}{} & $\binom{n+1}{2}$ & $(n+1)!$ \\
$B_n, C_n$ && \dynkin[Coxeter]{B}{} &$n^2$&$2^nn!$\\
$D_n$ && \dynkin[Coxeter]{D}{} &$n^2-n$&$2^{n-1}n!$\\
$E_6$ && \dynkin[Coxeter]{E}{6}&36& 51,840 \\
$E_7$ && \dynkin[Coxeter]{E}{7}&63& 2,903,040 \\
$E_8$ && \dynkin[Coxeter]{E}{8}\qquad \,&120 & 696,729,600\\
$F_4$ && \dynkin[Coxeter]{F}{4}&24& 1,152 \\
$H_3$ && \dynkin[Coxeter]{H}{3}&15& 120\\
$H_4$ && \dynkin[Coxeter]{H}{4}&60& 14,400\\
$I_2(n)$ && \dynkin [Coxeter,gonality=n]I{}&$n$&$2n$ \\
\end{tabular}\caption{The finite Coxeter groups~\cite{H90}.}\label{fig:coxG}\end{figure}

We begin with a study of ${\sf maxw}_0(W, x)$ in Coxeter groups of each type,
in preparation for probabilistic analysis in later sections.

\begin{proposition}
\label{prop:maxwAnalysis}
  Let (W,S) be a finite Coxeter system.
  \begin{itemize}
    \item[(i)]
      If W is of type $A_n$, then ${\sf maxw}_0(W, x) = {x + 1 \choose 2}$.
    \item[(ii)]
      If $W$ is of type $B_n$ or $C_n$, then  ${\sf maxw}_0(W, x) = x^2$.
    \item[(iii)]
      If $W$ is of type $D_n$, then \[
        {\sf maxw}_0(W, x) :=
        \begin{cases}
          x^2 - x & x > 3 \\
          {x + 1 \choose 2} & x \leq 3
        \end{cases}.
      \]
    \item[(iv)]
      If $W$ is of type $E_6, E_7,$ or $E_8$, then \[
        {\sf maxw}_0(W, x) :=
        \begin{cases}
          120 & x=8 \\
          63 & x=7 \\
          36 & x=6 \\
          20 & x=5 \\
          12 & x=4 \\
          6 & x=3 \\
          3 & x=2 \\
          1 & x=1 \\
          0 & x=0
        \end{cases}.
      \]
    \item[(v)]
      If $W$ is of type $F_4$, then  \[
        {\sf maxw}_0(W, x) :=
        \begin{cases}
          24 & x=4 \\
          9 & x=3 \\
          4 & x=2 \\
          1 & x=1 \\
          0 & x=0
        \end{cases}.
      \]
    \item[(vi)]
      If $W$ is of type $H_3$ or $H_4$, then  \[
        {\sf maxw}_0(W, x) :=
        \begin{cases}
          60 & x=4 \\
          15 & x=3 \\
          5 & x=2 \\
          1 & x=1\\
          0 & x=0
        \end{cases}.
      \]
    \item[(vii)]
      If $W$ is of type $I_2(n)$, then \[
        {\sf maxw}_0(W, x) :=
        \begin{cases}
          n & x = 2 \\
          x & x < 2
        \end{cases}.
      \]
  \end{itemize}
\end{proposition}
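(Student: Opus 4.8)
The plan is to reduce the whole proposition to the additivity formula~\eqref{eqn:thelength} together with the classification of connected induced subdiagrams, handling the infinite families by a superadditivity (convexity) argument and the exceptional types by finite inspection. First I would record the reduction: for any $I\subseteq[n]$ with $|I|=x$, the diagram $\mathcal G_I$ is an induced subdiagram of $\mathcal G$ on $x$ nodes, and by~\eqref{eqn:thedecomp}--\eqref{eqn:thelength} the quantity $\ell(w_0(W_I))$ equals the sum of $\ell(w_0(W^{(z)}))$ over the connected components $\mathcal C^{(z)}$ of $\mathcal G_I$, each term being read off from Figure~\ref{fig:coxG} according to its irreducible type. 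Thus $\mathsf{maxw}_0(W,x)$ is the maximum, over all choices of $x$ nodes, of the total longest-element length of the resulting pieces. The admissible connected pieces are exactly the connected induced subdiagrams of $\mathcal G$: for $A_n$ the types $A_k$; for $B_n/C_n$ the types $A_k$ and $B_k$; for $D_n$ the types $A_k$ and $D_k$ (with $D_k$ degenerating to $A_k$ for $k\le 3$ and genuinely available only for $k\ge 4$); and for the exceptional and dihedral diagrams a short explicit list.

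For the infinite families, the key inputs are the superadditivity inequalities, each an immediate computation. The basic one is $\binom{a+1}{2}+\binom{b+1}{2}\le\binom{a+b+1}{2}$, with difference $ab\ge 0$, together with the merging bounds $k^2+\binom{j+1}{2}\le (k+j)^2$ and $\bigl((k+j)^2-(k+j)\bigr)-\bigl((k^2-k)+\binom{j+1}{2}\bigr)=2kj+\tfrac12 j(j-3)\ge 0$ for $k\ge 4$. Since a single branch or marked edge forces at most one non-$A$ component, these show that merging components never decreases the total length, so an optimal $I$ uses one connected piece. Comparing the candidate single pieces then closes each type: for $A_n$ the only option is $A_x$, giving $\binom{x+1}{2}$; for $B_n/C_n$ one checks $x^2=\ell(w_0(B_x))\ge\binom{x+1}{2}=\ell(w_0(A_x))$, so $B_x$ wins with value $x^2$; and for $D_n$ the comparison $\ell(w_0(D_x))-\binom{x+1}{2}=\tfrac12 x(x-3)$ flips sign exactly at $x=3$, while $D_k$ is unavailable for $k\le 3$, yielding the stated piecewise formula. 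Realizability is clear since $A_x\subseteq A_n$, $B_x\subseteq B_n$, and $D_x\subseteq D_n$ whenever $x\le n$. The dihedral case $I_2(n)$ is immediate, as $[n]$ has only two nodes.

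Finally, the exceptional types $E_6,E_7,E_8,F_4,H_3,H_4$ are settled by finite inspection, again using superadditivity to discard disconnected choices and then listing the best connected induced subdiagram for each $x$. The point requiring care is identifying which ``foreign'' subdiagrams occur: in $E_6\subseteq E_7\subseteq E_8$ the branch node produces induced $D_4$ and $D_5$ subdiagrams, which beat $A_4,A_5$ and account for the entries $12$ and $20$ at $x=4,5$, while the full $E_6,E_7,E_8$ give $36,63,120$ (these dominate the $D_k$ alternatives, e.g.\ $\ell(w_0(E_6))=36>30=\ell(w_0(D_6))$); in $F_4$ the marked edge yields an $I_2(4)=B_2$ and a $B_3/C_3$, giving $4$ and $9$ at $x=2,3$; and in $H_3,H_4$ the edge labeled $5$ yields $I_2(5)$ and $H_3$, giving $5$ and $15$. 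Because $E_6,E_7,E_8$ (respectively $H_3,H_4$) share their smaller induced subdiagrams, the tabulated values agree on the overlapping range of $x$. I expect this bookkeeping---locating the optimal subdiagram in the exceptional cases and pinning the $x=3$ threshold in type $D$---to be the only genuine obstacle; the inequalities themselves are routine.
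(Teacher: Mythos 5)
Your proposal is correct and follows essentially the same route as the paper: reduce via the decomposition \eqref{eqn:thedecomp} and additivity \eqref{eqn:thelength}, use the superadditivity $\binom{a+1}{2}+\binom{b+1}{2}\le\binom{a+b+1}{2}$ to collapse the type-$A$ components, compare the resulting single-piece candidates ($A_x$ versus $B_x$ or $D_x$, with the $x\le 3$ threshold in type $D$), and settle the exceptional and dihedral types by finite inspection of induced subdiagrams. The only difference is cosmetic: where you finish with pairwise merging inequalities, the paper folds the $A$-parts into a quadratic $f(m)=\binom{x-m+1}{2}+m^2$ (resp.\ $+\,m^2-m$) and invokes convexity to compare the endpoints $m=0$ and $m=x$, which is the same comparison.
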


\begin{proof}
  Figure~\ref{fig:coxG} contains the Coxeter diagrams and the length of the longest element for each of the finite Coxeter groups in this proof. Let $I \subset [n]$ with $|I| = x$. 

  \noindent \textit{(i)}
    When $W$ is of type $A_n$, then ${\mathcal G}_I$ decomposes into connected components
    $C^{(1)},\ldots,C^{(z)}$ of type $A_{k_1},\ldots,A_{k_z}$ respectively,
    with $k_1,\ldots,k_z > 0$, $z \geq 1$, and $k_1 + \cdots + k_z = x$.
    By \eqref{eqn:thelength}, 
    \begin{align*}
    \ell(w_0(W_I)) = \ell(w_0(A_{k_1})) + \cdots + \ell(w_0(A_{k_z})) 
    & = {k_1 + 1 \choose 2}+ \cdots + {k_z + 1 \choose 2} & \\
    & = \frac{k_1^2 + \cdots k_z^2 + k_1 + \cdots + k_z}{2} & \\
    & \leq \frac{(k_1 + \cdots + k_z + 1)(k_1 + \cdots + k_z)}{2} & \\
    & = {k_1 + \cdots + k_z + 1 \choose 2} = {x + 1 \choose 2}. &
    \end{align*}
    This upper bound is realized when $z=1$ and $k_1 = x$. Thus ${\sf maxw}_0(W, x)={x + 1 \choose 2}$.

  \noindent \textit{(ii)}
    When $W$ is of type $B_n$ or $C_n$, then ${\mathcal G}_I$ decomposes into connected components of type
    $A_{k_1},\ldots,A_{k_z}$ and $B_m$, with $k_1,\ldots,k_z > 0$, $z \geq 0$,
    $m \geq 0$ and $k_1 + \cdots + k_z + m = x$.
    Note that when $m=0$ (respectively, $z=0$) we take this to mean that there
    is no connected component of ${\mathcal G}_I$ of type $B_{s}$
    (respectively, of type $A_{s}$) for any natural number $s$.
    Thus, \eqref{eqn:thelength} implies 
    \begin{align*}
    \ell(w_0(W_I)) & = \ell(w_0(A_{k_1})) + \cdots + \ell(w_0(A_{k_z})) + \ell(w_0(B_m)) & \\
    & = {k_1 + 1 \choose 2} + \cdots + {k_z + 1 \choose 2} + m^2 & \\
    & \leq {k_1 + \cdots + k_z + 1 \choose 2} + m^2 & \\
    & =  {x - m + 1 \choose 2} + m^2 & \\
    & = \frac{3}{2} m^2 - \frac{2x+1}{2}m + \frac{x^2 + x}{2} =: f_1(m). &
    \end{align*}
    Then $m$ is an integer value in the closed interval $[0,x]$. The function $f_1$ is convex on the closed interval $[0,x]$ and hence $f_1$ achieves its maximum at one of the endpoints. For $x\!\geq\!0$, $f_1(0) = {x + 1 \choose 2} \leq x^2 = f_1(x)$.
    This upper bound is realized when $z\!=\!0$, $m\!=\!x$. It follows that ${\sf maxw}_0(W, x)\!=\!x^2$.

  \noindent \textit{(iii)}
    When $W$ is of type $D_n$, then ${\mathcal G}_I$ decomposes into connected components of type
    $A_{k_1},\ldots,A_{k_z}$ and $D_m$, with $k_1,\ldots,k_z > 0$, $z \geq 0$,
    $m \geq 0$ with $m \neq 1,2,3$, and $k_1 + \cdots + k_z + m = x$.
    As in \textit{(ii)}, if $m=0$ (respectively, $z=0$), then we take this to
    mean there are no connected components in ${\mathcal G}_I$ of type $D_s$
    (respectively, of type $A_s$) for any natural number $s$.
    Now, \eqref{eqn:thelength} implies 
    \begin{align*}
    \ell(w_0(W_I)) & = \ell(w_0(A_{k_1})) + \cdots + \ell(w_0(A_{k_z})) + \ell(w_0(D_m)) & \\
    & = {k_1 + 1 \choose 2} + \cdots + {k_z + 1 \choose 2} + m^2 - m & \\
    & \leq {k_1 + \cdots + k_z + 1 \choose 2} + m^2 - m & \\
    & =  {x - m + 1 \choose 2} + m^2 - m & \\
    & = \frac{3}{2} m^2 - \frac{2x+3}{2}m + \frac{x^2 + x}{2} =: f_2(m). &
    \end{align*}
    Then $m$ is an integer value in the interval $[0,x]$. The function $f_2$ is convex on the closed interval $[0,x]$ and hence $f_2$ achieves its maximum at one of the endpoints. If $x > 3$, then $f_2(0) = {x + 1 \choose 2} \leq x^2- x = f_2(x)$. The upper bound is realized when $z=0$ and $m=x$ for $x > 3$. Hence ${\sf maxw}_0(W, x)\!=\!x^2\!-\!x$ for $x\!>\!3$. 

    If $x \leq 3$, then $f_2(0)\!=\!{x + 1 \choose 2}\!\geq\!x^2- x = f_2(x)$. This upper bound is realized when $z=1$, $m=0$, and $k_1 = x$. Hence ${\sf maxw}_0(W, x) = {x + 1 \choose 2}$ for $x \leq 3$. 

  \noindent \textit{(iv)-(vii)}
    Each of these cases can be trivially checked via the enumeration of all
    induced subdiagrams of a fixed size.
\end{proof}

\subsection{Spherical implies proper}
We show that sphericality, both in the geometric and Coxeter sense,
implies properness.
This allows the proofs of Theorem \ref{thm:main2}, \ref{thm:main3} to be reduced
to Theorem \ref{thm:main1}.
The following is a generalization of \cite[Proposition 3.1]{BHY20}.
\begin{proposition}
\label{prop:geosphericalimpliesproper}
  Let $G$ be a rank $r$ simple group with Weyl group $W$.
  If $X_w \subseteq G/B$ is $L_I$-spherical for $I \subseteq J(w)$,
  then $w$ is proper.
\end{proposition}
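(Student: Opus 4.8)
The plan is to compare the dimension of $X_w$ with that of a Borel subgroup of $L_I$, exploiting the dense orbit furnished by sphericity. Suppose $X_w$ is $L_I$-spherical. Since any two Borel subgroups of $L_I$ are conjugate and conjugation merely translates orbits, we may take the Borel $B_{L_I} = B \cap L_I$; by hypothesis it admits an open dense orbit $\mathcal{O}$ in $X_w$. As $X_w$ is irreducible of dimension $\ell(w)$ and $\mathcal{O}$ is open and dense, $\dim{\mathcal{O}} = \dim{X_w} = \ell(w)$. Because $\mathcal{O}$ is a single $B_{L_I}$-orbit its dimension cannot exceed that of the acting group, so first I would record the inequality $\ell(w) = \dim{\mathcal{O}} \leq \dim{B_{L_I}}$.

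The second step is the Lie-theoretic computation of $\dim{B_{L_I}}$. The subgroup $B_{L_I} = B \cap L_I$ is a Borel subgroup of the reductive group $L_I$ containing the maximal torus $T$, so $\dim{B_{L_I}} = \dim{T} + N_I$, where $N_I$ is the number of positive roots of $L_I$. Since $G$ is simple of rank $n$, its maximal torus satisfies $\dim{T} = n$; and since $W_I$ is precisely the Weyl group of $L_I$, the count $N_I$ equals the number of reflections in $W_I$, namely $\ell(w_0(W_I))$. Hence $\dim{B_{L_I}} = n + \ell(w_0(W_I))$, and the bound from the first step becomes $\ell(w) \leq n + \ell(w_0(W_I))$.

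The final step upgrades $\ell(w_0(W_I))$ to ${\sf maxw}_0(W, d(w))$. As $I \subseteq J(w)$ we have $|I| \leq |J(w)| = d(w)$, and by the definition of ${\sf maxw}_0$ we have $\ell(w_0(W_I)) \leq {\sf maxw}_0(W, |I|)$. It then suffices to note that ${\sf maxw}_0(W, \cdot)$ is nondecreasing: any $I$ with $|I| = x$ extends to some $I'$ with $|I'| = x+1$ and $W_I \leq W_{I'}$, so $\ell(w_0(W_I)) \leq \ell(w_0(W_{I'}))$, whence ${\sf maxw}_0(W, x) \leq {\sf maxw}_0(W, x+1)$. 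Chaining the three inequalities gives $\ell(w) \leq n + {\sf maxw}_0(W, d(w))$, which is exactly the condition that $w$ be proper.

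I expect no genuine obstacle here; the argument generalizes the type-$A$ case of \cite[Proposition 3.1]{BHY20}, and the supporting facts—that Schubert varieties are irreducible of dimension $\ell(w)$, that an orbit has dimension at most that of its group, and the monotonicity of ${\sf maxw}_0$—are all standard. The one point demanding care is matching the constant: the summand $n$ in the definition of properness must coincide with $\dim{T}$, and this holds precisely because $G$ is \emph{simple}, so that the rank of $G$ equals the rank $n$ of $W$. This is exactly the bookkeeping the authors flag in Section~\ref{subsec:prev} as the source of the difference of $1$ between the $SL_n$ and $GL_n$ conventions.
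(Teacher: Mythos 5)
Your proof is correct, and its core coincides with the paper's: bound $\dim{X_w}=\ell(w)$ by the dimension of a Borel subgroup of the Levi via the open dense orbit, then compute that dimension as $\dim{T}$ plus the number of positive roots, i.e.\ $r+\ell(w_0(W_I))$, using $r=n$ for $G$ simple. The structural difference is how each argument reaches ${\sf maxw}_0(W,d(w))$. The paper first invokes \cite[Proposition 2.13]{HY20} to upgrade $L_I$-sphericality to $L_{J(w)}$-sphericality and runs the dimension count with $J(w)$ itself; since $|J(w)|=d(w)$, the final bound $\ell(w_0(W_{J(w)}))\leq {\sf maxw}_0(W,d(w))$ is immediate from the definition, with no monotonicity statement needed. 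You instead keep the given $I$, obtain $\ell(w)\leq n+\ell(w_0(W_I))$, and bridge from $|I|$ to $d(w)$ by proving ${\sf maxw}_0(W,\cdot)$ is nondecreasing --- which is valid, since $I\subseteq I'$ gives $W_I\leq W_{I'}$ and length in a standard parabolic subgroup agrees with length in the ambient group, so $\ell(w_0(W_I))\leq\ell(w_0(W_{I'}))$. Your route trades a nontrivial external geometric input (the reduction to $J(w)$ from \cite{HY20}) for an elementary combinatorial lemma, making the argument more self-contained; the paper's route keeps the combinatorics trivial at the cost of the citation. One point to make explicit in your first step: translating the dense orbit by $g\in L_I$ stays inside $X_w$ only because $L_I\leq P_{J(w)}={\sf stab}_G(X_w)$, so $gX_w=X_w$; alternatively you can avoid choosing $B\cap L_I$ altogether, since conjugate Borel subgroups have equal dimension, which is all the count requires.
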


\begin{proof}
  If $X_w$ is $L_I$-spherical, then $X_w$ is
  $L_{J(w)}$-spherical~\cite[Proposition 2.13]{HY20}.
  By definition, $X_w$ is $L_{J(w)}$-spherical implies that there is a Borel
  subgroup $K \subset L_{J(w)}$ with an open dense orbit ${\mathcal O}$ in $X_w$.
  For $x \in {\mathcal O}$, let $K_x$ be the isotropy group of $x$.
  By \cite[Proposition 1.11]{Brion}, ${\mathcal O} = K \cdot x$ is a smooth,
  closed subvariety of $X_w$ of dimension $\dim{K} - \dim{K_x}$.
  Thus
  \begin{equation}\label{eq:dimUBSpherical}
    \dim{X_w}
    = \dim{{\mathcal O}}
    = \dim{K}-\dim{K_x}
    \leq \dim{K},
  \end{equation}
  where the first equality follows since ${\mathcal O}$ is dense in $X_w$.
  All Borel subgroups of a connected algebraic group are conjugate
  ~\cite[\S 11.1]{Borel}.
  Hence
  \begin{equation}\label{eq:dimUBSpherical2}
    \dim{K}
    = \dim{B_{J(w)}}
    = \dim{T} + \dim{U_{J(w)}},
  \end{equation}
  where the final equality follows from \cite[\S 11.1]{Borel}.
  Finally, we use the fact that $\dim{U_{J(w)}}$ equals the number of positive
  roots in the root system of $L_{J(w)}$, which is in turn equal to
  $\ell(w_0(W_{J(w)}))$~\cite[\S 1.7]{H90}.
  Combining this with \eqref{eq:dimUBSpherical} and \eqref{eq:dimUBSpherical2}
  yields \[
    \ell(w) 
    = \dim{X_w} 
    \leq \dim{T} + \dim{U_{J(w)}}
    = r + \ell(w_0(W_{J(w)}))
    \leq r + {\sf maxw}_0(W,d(w)).
  \]
  We conclude that $w$ is proper.
\end{proof}

\begin{proposition}
\label{prop:coxsphericalimpliesproper}
  Let $W$ be a finite Coxeter group with $w\in W$ and $I \subseteq J(w)$.
  If $w$ is $I$-spherical, then $w$ is proper.
\end{proposition}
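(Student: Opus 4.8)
The plan is to directly compare the two inequalities defining $I$-sphericality (Definition~\ref{def:sphricalCoxeter}) with the single length bound defining properness (Definition~\ref{def:main}). Fix a reduced word $s_{i_1}\cdots s_{i_{\ell(w)}}$ witnessing that $w$ is $I$-spherical, and partition its letters according to their index: every index of $[n]$ lies either outside $I$, or in exactly one connected component $\mathcal{C}^{(z)}$ of $\mathcal{G}_I$ (since $I$ is the vertex set of $\mathcal{G}_I$, which decomposes into the disjoint $\mathcal{C}^{(z)}$). This gives the exact count
\[
\ell(w) = \sum_{j\in[n]-I}\#\{t: i_t=j\} \;+\; \sum_{z=1}^m \#\{t: i_t\in\mathcal{C}^{(z)}\}.
\]

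First I would bound the first sum using \eqref{S.1}, which says each index outside $I$ is used at most once; hence this sum is at most $n-|I|$. Next I would bound the second sum using \eqref{S.2}, which gives $\#\{t: i_t \in \mathcal{C}^{(z)}\} \le \ell(w_0(W^{(z)})) + \#\text{vertices}(\mathcal{C}^{(z)})$ for each $z$. Summing over $z$, and using that the vertex sets of the $\mathcal{C}^{(z)}$ partition $I$ together with the additivity of longest-element lengths over connected components \eqref{eqn:thelength}, the second sum is at most $\ell(w_0(W_I)) + |I|$. Adding the two bounds, the $\pm|I|$ terms cancel and I obtain $\ell(w) \le n + \ell(w_0(W_I))$.

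It remains to pass from $\ell(w_0(W_I))$ to ${\sf maxw}_0(W, d(w))$. Since $I \subseteq J(w)$ we have $|I| \le d(w)$. Choosing any $I'$ with $I \subseteq I' \subseteq [n]$ and $|I'| = d(w)$ (possible as $d(w) \le n$), the element $w_0(W_I)$ lies in the parabolic subgroup $W_{I'}$, so its length is at most that of the unique longest element $w_0(W_{I'})$; hence $\ell(w_0(W_I)) \le \ell(w_0(W_{I'})) \le {\sf maxw}_0(W, d(w))$. Combining this with the previous paragraph yields $\ell(w) \le n + {\sf maxw}_0(W, d(w))$, i.e.\ $w$ is proper.

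There is no serious obstacle here; the heart of the argument is the bookkeeping cancellation in the second paragraph. The one point requiring a little care is that $I$-sphericality only constrains the index set $I$, whereas properness is phrased in terms of the full descent set, so $|I|$ may be strictly smaller than $d(w)$. This is exactly what the inclusion step in the third paragraph handles, via the monotonicity $\ell(w_0(W_I)) \le \ell(w_0(W_{I'}))$ under $I\subseteq I'$ (equivalently, that ${\sf maxw}_0(W,x)$ is non-decreasing in $x$, which is visible directly in the formulas of Proposition~\ref{prop:maxwAnalysis}).
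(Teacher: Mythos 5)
Your proof is correct, but it takes a different route from the paper's at one key juncture. The paper does not work with the $I$-spherical witness directly: it first cites \cite[Proposition~2.12]{HY20} to upgrade $I$-sphericality to $J(w)$-sphericality, and then runs the letter-count on a reduced word witnessing the latter. Since that word's constraints are indexed by $J(w)$ itself, the counting gives $\ell(w) \leq n + \ell(w_0(W_{J(w)}))$, and because $|J(w)| = d(w)$ the final inequality $\ell(w_0(W_{J(w)})) \leq {\sf maxw}_0(W,d(w))$ holds by the very definition of ${\sf maxw}_0$. You instead run the same bookkeeping --- \eqref{S.1} bounding the letters with index outside $I$ by $n - |I|$, and \eqref{S.2} together with \eqref{eqn:thelength} bounding the letters with index inside $I$ by $\ell(w_0(W_I)) + |I|$, with the $\pm|I|$ cancelling --- directly on the $I$-spherical witness, and then close the gap between $|I|$ and $d(w)$ yourself: you enlarge $I$ to some $I'$ of size $d(w)$ and use $\ell(w_0(W_I)) \leq \ell(w_0(W_{I'})) \leq {\sf maxw}_0(W,d(w))$, which is valid since $w_0(W_I) \in W_I \subseteq W_{I'}$ and the longest element of $W_{I'}$ maximizes length there. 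What your version buys is self-containedness: the implication ``$I$-spherical $\Rightarrow$ $J(w)$-spherical'' is a nontrivial external result, and you replace it by an elementary monotonicity observation. What the paper's version buys is brevity and parallelism with its geometric counterpart (Proposition~\ref{prop:geosphericalimpliesproper} likewise reduces to the $J(w)$ case via a companion result in \cite{HY20}). Both arguments are sound.
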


\begin{proof}
  Let ${\mathcal C}^{(1)},\ldots,{\mathcal C}^{(m)}$ be the connected components
  of ${\mathcal G}_{J(w)}$, and $W^{(z)}$ the parabolic subgroup of $W$ with
  Coxeter diagram ${\mathcal C}^{(z)}$.
  By ~\cite[Proposition 2.12]{HY20}, if $w$ is $I$-spherical, then $w$ is
  $J(w)$-spherical. Thus there must be an $R \in {\text{Red}}(w)$ satisfying \eqref{S.1} and \eqref{S.2}.
  This implies, via \eqref{S.1}, that at most $n - d(w)$ factors of $R$ must be of
  the form $s_j$ for $j \notin J(w)$. Which implies at least $\ell(w) - (n-d(w))$ factors of $R$ are of the form $s_j$ for $j \in J(w)$.
  Hence, by \eqref{S.2} and \eqref{eqn:thelength},
  \begin{align*}
    \ell(w) - (n-d(w))
    & \leq  \displaystyle \sum_{z=1}^m \left( \ell(w_0({W^{(z)}})) + \#\text{vertices}({\mathcal C}^{(z)}) \right) & \\
    & = d(w) + \sum_{z=1}^m \ell(w_0({W^{(z)}})) &  \\
    & = d(w) + \ell(w_0(W_{J(w)})) & \\
    & \leq d(w) + {\sf maxw}_0(W,d(w)). &
  \end{align*}
  It follows that $w$ is proper.
\end{proof}

We are now able to prove both Theorem~\ref{thm:main2} and Theorem~\ref{thm:main3}, assuming Theorem~\ref{thm:main1}.

\noindent \textit{Proof of Theorem~\ref{thm:main2}:} If $X_w$ is $L_{J(w)}$-spherical, then $w$ is proper by Proposition~\ref{prop:geosphericalimpliesproper}. Hence
\begin{equation}
\label{eq:squeeze}
{\rm Pr}[X_w \subseteq G/B \text{\ is $L_{J(w)}$-spherical}] \leq {\rm Pr}[w \text{\ is proper}]
\end{equation}
when $w$ is sampled from $W$ uniformly at random.
Theorem~\ref{thm:main1} implies that ${\rm Pr}[w \text{\ is proper}]\longrightarrow 0$ as $n\longrightarrow\infty$. Thus our desired result follows by \eqref{eq:squeeze} and the squeeze theorem.

\noindent \textit{Proof of Theorem~\ref{thm:main3}:} This follows by an identical argument after applying Proposition~\ref{prop:coxsphericalimpliesproper}.

\section{Concentration bounds}
\label{sec:prob}

\subsection{Concentration Bounds}
In this section we compute some concentration bounds that will be useful for
bounding the number of proper elements of a Coxeter group in Section
\ref{section:5}.
Lemmas \ref{lemma:chernoff} and \ref{lemma:1} will be used to bound the number of
elements with left descents deviating far from the average.
Lemmas \ref{lemma:4} and \ref{lemma:2} will be used to bound the number of
elements with length deviating far from the average.
Finally, Lemma \ref{lemma:3} will be used to bound the number of
elements satisfying a certain inequality involving length and left descents.
See Chapter 3 of~\cite{RP23} for some of the terminology
and basic properties of conditional expectations and martingales that are used in this section.

\begin{theorem}\label{theorem:chernoff-dp}(Chernoff Bound; from Theorem 1.1 of \cite{DP09})
Let $X_1,\ldots,X_n$ be (mutually) independently distributed random variables in the range $[0,1]$, and let
$X := \sum_i X_i$.
Then for any $\gD\in(0,1)$,
\begin{equation}
\label{eq:chernoff-dp}
    \Pr{X<(1-\gD)\E{X}}\leq e^{-\gD^2\E{X}/2}.
\end{equation}
\end{theorem}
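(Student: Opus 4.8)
The plan is to establish this lower-tail inequality by the standard \emph{exponential moment method}: apply Markov's inequality to a negative exponential of $X$, exploit independence to factor the moment generating function, optimize the free exponential parameter, and finally replace the resulting sharp exponent by the clean quadratic bound $\gD^2\E{X}/2$.

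First I would fix a real parameter $t>0$ and observe that the event $\{X<(1-\gD)\E{X}\}$ coincides with $\{\Exp{-tX}>\Exp{-t(1-\gD)\E{X}}\}$, since $x\mapsto e^{-tx}$ is strictly decreasing. Applying Markov's inequality to the nonnegative random variable $\Exp{-tX}$ then yields
\[
  \Pr{X<(1-\gD)\E{X}} \leq \Exp{t(1-\gD)\E{X}}\cdot\E{\Exp{-tX}}.
\]
By mutual independence of the $X_i$, the moment generating factor splits as $\E{\Exp{-tX}}=\prod_i \E{\Exp{-tX_i}}$. For each $i$, convexity of $x\mapsto e^{-tx}$ on $[0,1]$ gives the chord bound $e^{-tX_i}\leq 1-(1-e^{-t})X_i$; taking expectations and using $1+u\leq e^{u}$ produces
\[
  \E{\Exp{-tX_i}} \leq 1-(1-e^{-t})\E{X_i} \leq \Exp{-(1-e^{-t})\E{X_i}}.
\]
Multiplying over $i$ and using $\E{X}=\sum_i\E{X_i}$ gives $\E{\Exp{-tX}}\leq\Exp{-(1-e^{-t})\E{X}}$, so that
\[
  \Pr{X<(1-\gD)\E{X}} \leq \Exp{\big(t(1-\gD)-(1-e^{-t})\big)\E{X}}.
\]

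Next I would optimize the exponent over $t>0$. Differentiating $t(1-\gD)-(1-e^{-t})$ and setting the derivative to zero gives $e^{-t}=1-\gD$, i.e.\ $t=-\ln(1-\gD)$; this is precisely where the hypothesis $\gD\in(0,1)$ is essential, since it guarantees both $t>0$ and $1-\gD>0$. Substituting this optimizer back collapses the bound to the relative-entropy form
\[
  \Pr{X<(1-\gD)\E{X}} \leq \Exp{-\big((1-\gD)\ln(1-\gD)+\gD\big)\E{X}}.
\]

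The remaining step, which I expect to be the only genuinely delicate part, is the purely analytic comparison
\[
  (1-\gD)\ln(1-\gD)+\gD \geq \tfrac{\gD^2}{2} \qquad\text{for all } \gD\in(0,1),
\]
which converts the sharp exponent into the stated Gaussian-type rate and supplies the factor $1/2$. I would prove it by setting $g(\gD):=(1-\gD)\ln(1-\gD)+\gD-\tfrac{\gD^2}{2}$, noting $g(0)=0$, and computing $g'(\gD)=-\ln(1-\gD)-\gD\geq 0$ on $(0,1)$ (itself immediate from the series $-\ln(1-\gD)=\gD+\gD^2/2+\cdots$); hence $g$ is nondecreasing and remains nonnegative. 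Combining this inequality with the previous display yields $\Pr{X<(1-\gD)\E{X}}\leq\Exp{-\gD^2\E{X}/2}$, as claimed.
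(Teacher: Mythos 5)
Your proof is correct, and it is worth noting that the paper itself does not prove this statement at all: Theorem \ref{theorem:chernoff-dp} is imported verbatim from Theorem 1.1 of \cite{DP09}, so your argument supplies a proof that the paper simply outsources. What you wrote is the standard exponential-moment (Chernoff) derivation, and it is essentially the same argument given in the cited reference: Markov's inequality applied to $e^{-tX}$, factorization of the moment generating function by independence, the chord bound $e^{-tx}\leq 1-(1-e^{-t})x$ on $[0,1]$ followed by $1+u\leq e^u$, optimization at $e^{-t}=1-\gD$ to reach the relative-entropy exponent $-\bigl((1-\gD)\ln(1-\gD)+\gD\bigr)\E{X}$, and finally the calculus fact $(1-\gD)\ln(1-\gD)+\gD\geq\gD^2/2$ via $g'(\gD)=-\ln(1-\gD)-\gD\geq0$. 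All steps check out, including the two places where $\gD\in(0,1)$ is used (positivity of the optimizer $t$ and validity of the series expansion), and the degenerate case $\E{X}=0$ causes no trouble since both sides are then trivially $0\leq1$. In short: correct, complete, and faithful to the source the paper cites.
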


\begin{lemma}\label{lemma:chernoff}
  Let $X_1,\ldots,X_m$ be mutually independently distributed random variables in the range $\{0,1\}$.
  Let $X:=\sum_i X_i$.
  Then for any $\gd>0$, we have both \[
    \Pr{X-\E{X}>\gd m}\leq e^{-\gd^2m/2}
    \quad\text{and}\quad
    \Pr{\E{X}-X>\gd m}\leq e^{-\gd^2m/2}.
  \]
\end{lemma}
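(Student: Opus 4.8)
The plan is to derive both tail bounds from the single one-sided lower-tail estimate of Theorem~\ref{theorem:chernoff-dp}, which applies verbatim to our variables since each $X_i\in\{0,1\}\subseteq[0,1]$ and the $X_i$ are mutually independent. The only work is rescaling the deviation parameter and handling the upper tail by a complementation trick.

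First I would treat the lower tail $\Pr{\E{X}-X>\delta m}$. Rewrite the event as $X<\E{X}-\delta m=(1-\Delta)\E{X}$ with the choice $\Delta:=\delta m/\E{X}$. (If $\E{X}=0$ then $X=0$ almost surely and both events of the lemma have probability zero, so that case is trivial and I set it aside.) If $\Delta\geq 1$, the threshold satisfies $(1-\Delta)\E{X}\leq 0$, and since $X\geq 0$ the event $X<(1-\Delta)\E{X}$ is impossible, so the bound holds trivially. If instead $\Delta\in(0,1)$, Theorem~\ref{theorem:chernoff-dp} gives $\Pr{X<(1-\Delta)\E{X}}\leq e^{-\Delta^2\E{X}/2}=e^{-\delta^2 m^2/(2\E{X})}$. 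The key observation is that $\E{X}=\sum_i\E{X_i}\leq m$ because each $X_i\leq 1$; hence $\delta^2 m^2/(2\E{X})\geq \delta^2 m/2$, which weakens the exponential estimate to exactly the claimed bound $e^{-\delta^2 m/2}$.

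For the upper tail $\Pr{X-\E{X}>\delta m}$, I would use the standard complementation: set $Y_i:=1-X_i$, which are again mutually independent and $\{0,1\}$-valued, and put $Y:=\sum_i Y_i=m-X$, so that $\E{Y}=m-\E{X}$. Then the event $X-\E{X}>\delta m$ coincides with $\E{Y}-Y>\delta m$, and applying the lower-tail bound just established to the variables $Y_i$ yields $e^{-\delta^2 m/2}$ directly.

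I do not expect any serious obstacle here; the argument is essentially bookkeeping. The two points that require a little care are the degenerate cases—$\E{X}=0$ and $\Delta\geq 1$, where the relevant probabilities vanish outright—and the elementary inequality $\E{X}\leq m$, which is precisely what converts the $\E{X}$-dependent exponent supplied by Theorem~\ref{theorem:chernoff-dp} into the uniform, $m$-dependent exponent demanded by the lemma.
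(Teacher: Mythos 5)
Your proposal is correct and follows essentially the same route as the paper's own proof: both derive the lower-tail bound from Theorem~\ref{theorem:chernoff-dp} with $\Delta=\gd m/\E{X}$, dispose of the degenerate cases $\E{X}=0$ and $\gd m\geq\E{X}$ by noting the event is impossible, use $\E{X}\leq m$ to weaken the exponent to $e^{-\gd^2 m/2}$, and obtain the upper tail by the complementation $X_i\mapsto 1-X_i$. No gaps; this matches the paper's argument step for step.
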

\begin{proof}
  If $\E{X}=0$, then $X=0$, in which case $\Pr{\E{X}-X>\gd m}=\Pr{\gd m<0}=0$. If $\gd m\geq\E{X}$, then $\Pr{\E{X}-X>\gd m}\leq\Pr{\E{X}-X>\E{X}}=\Pr{X<0}=0$. In the case when $\gd m < \E{X}$,
  \begin{align*}
    \Pr{\E{X}-X>\gd m}
    &=\Pr{X<(1-\nicefrac{\gd m}{\E{X}})\E{X}} \\
    &\leq\Exp{-\left(\frac{\gd m}{\E{X}}\right)^2\cdot\frac{\E{X}}2} &\text{by \cref{theorem:chernoff-dp}} \\
    &=\Exp{\frac{-\gd^2 m}2\cdot\frac{m}{\E{X}}} \\
    &\leq e^{-\gd^2 m/2}& \\
  \end{align*}
  where the last inequality from the fact that $m\geq\E{X}$.
  To show the former bound, set $X_i':=1-X_i$ and $X':=\sum_i X_i'$.
  Then $X-\E{X}=\E{X'}-X'$, and a similar analysis to the one above can be performed on $X'$.
\end{proof}

\begin{lemma}\label{lemma:1}
  Let $\Set{I_\ga}_{\ga\in\gC}$ be a set of identically distributed
  random variables in the range $\{0,1\}$, and let $\gC_1,\ldots,\gC_k\subseteq\gC$ be such
  that:
  \begin{enumerate}[label=(\roman*)]
    \item $\gC_i\cap\gC_j=\emptyset$, for all $i\neq j$;
    \item $\sum_i|\gC_i|=n-1$;
    \item $\Floor{(n-1)/k}\leq|\gC_i|\leq\Floor{n/k}$;
    \item for each $i \in [k]$, the elements of $\Set{I_a}_{a\in\Gamma_i}$ are mutually independent.
  \end{enumerate}
  Let $S_i=\sum_{a\in\gC_i}I_a$ and $S=\sum_i S_i$.
  Then for any $\gd>0$, we have both \[
    \Pr{S-\E{S}>\gd n}\leq ke^{\nicefrac{-\gd^2(n-2)}{2k}}
    \quad\text{and}\quad
    \Pr{\E{S}-S>\gd n}\leq ke^{\nicefrac{-\gd^2(n-2)}{2k}}.
  \]
\end{lemma}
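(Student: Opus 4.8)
The plan is to reduce the global deviation event---on which the variables belonging to distinct blocks $\gC_i$ need not be independent---to a union of per-block deviation events, on each of which Lemma~\ref{lemma:chernoff} applies directly because hypothesis (iv) supplies mutual independence within each block. Concretely, I would first write $S-\E{S}=\sum_{i=1}^k\left(S_i-\E{S_i}\right)$ and observe that if $S_i-\E{S_i}\leq\gd n/k$ held for every $i$, then $S-\E{S}\leq\gd n$. Taking the contrapositive gives the inclusion $\Set{S-\E{S}>\gd n}\subseteq\bigcup_{i=1}^k\Set{S_i-\E{S_i}>\gd n/k}$, so a union bound yields $\Pr{S-\E{S}>\gd n}\leq\sum_{i=1}^k\Pr{S_i-\E{S_i}>\gd n/k}$.

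Next I would fix $i$ and apply Lemma~\ref{lemma:chernoff} to the block $\Set{I_a}_{a\in\gC_i}$, which by hypothesis (iv) is a collection of mutually independent $\{0,1\}$-valued random variables. Taking the block size $m=|\gC_i|$ and the local deviation parameter $\gd n/(k|\gC_i|)>0$, chosen so that its product with $|\gC_i|$ is exactly the budget $\gd n/k$, the lemma gives $\Pr{S_i-\E{S_i}>\gd n/k}\leq\Exp{-\gd^2 n^2/(2k^2|\gC_i|)}$. By hypothesis (iii) we have $|\gC_i|\leq\Floor{n/k}\leq n/k$, so each summand is bounded by $\Exp{-\gd^2 n/(2k)}\leq\Exp{-\gd^2(n-2)/(2k)}$; summing the $k$ identical bounds produces the claimed inequality. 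I would note in passing that this in fact yields the slightly stronger exponent $\gd^2 n/(2k)$, so the stated $n-2$ leaves room to spare.

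For the lower tail, the identical pigeonhole step gives $\Set{\E{S}-S>\gd n}\subseteq\bigcup_{i=1}^k\Set{\E{S_i}-S_i>\gd n/k}$, and the second (lower-tail) half of Lemma~\ref{lemma:chernoff}, applied blockwise, closes the argument in exactly the same way. The only genuine subtlety is the opening reduction: since independence is assumed only \emph{within} each $\gC_i$, Chernoff cannot be invoked for $S$ as a whole, and splitting the deviation budget $\gd n$ into $k$ equal pieces of size $\gd n/k$ is precisely what converts the uncontrolled cross-block dependence into a benign factor of $k$ in front of the exponential. The remaining steps---verifying the hypotheses of Lemma~\ref{lemma:chernoff} and inserting the size bound $|\gC_i|\leq\Floor{n/k}$---are routine.
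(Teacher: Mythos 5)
Your proposal is correct and follows essentially the same route as the paper's proof: split the deviation budget $\gd n$ into $k$ pieces of $\gd n/k$ via pigeonhole, apply the union bound, invoke Lemma~\ref{lemma:chernoff} blockwise using hypothesis (iv), and finish with the size constraints in hypothesis (iii). The only (cosmetic) difference is that you rescale the local deviation parameter to $\gd n/(k|\gC_i|)$ and need only the upper bound $|\gC_i|\leq\Floor{n/k}$, giving the marginally sharper exponent $\gd^2 n/(2k)$, whereas the paper applies the lemma with parameter $\gd$ itself via the inclusion $\Set{S_i-\E{S_i}\geq\gd n/k}\subseteq\Set{S_i-\E{S_i}\geq\gd|\gC_i|}$ and then uses the lower bound $|\gC_i|\geq(n-2)/k$ to reach the stated $\gd^2(n-2)/(2k)$.
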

\begin{proof}
  By Lemma \ref{lemma:chernoff} we have
  \begin{equation}
  \label{eq:chernoff2}
  \Pr{S_i-\E{S_i}>\gd|\gC_i|}\leq e^{-\gd^2|\gC_i|/2}
  \end{equation}
  for each $i \in [k]$. Thus,
  \begin{align*}
    \Pr{S-\E{S}\geq\gd n}
    &=\Pr{\sum_i (S_i-\E{S_i})\geq\gd n} \\
    &\leq \Pr{\bigcup_{i}\Set{S_i-\E{S_i}\geq\gd n/k}} \\
    &\leq \sum_i\Pr{S_i-\E{S_i}\geq\gd n/k} &\text{by the union bound} \\
    &\leq \sum_i\Pr{S_i-\E{S_i}\geq\gd|\gC_i|} &\text{since }|\gC_i|\leq\Floor{n/k}\leq n/k \\
    &\leq \sum_i e^{-\gd^2|\gC_i|/2} &\text{by}~\eqref{eq:chernoff2}\\
    &\leq ke^{\nicefrac{-\gd^2(n-2)}{2k}}&\text{since }(n-2)/k \leq \Floor{(n-1)/k} \leq |\gC_i| \\
  \end{align*}
  Similarly, $\Pr{\E{S}-S\geq\gd n}\leq ke^{\nicefrac{-\gd^2(n-2)}{2k}}$ since the bound from Lemma \ref{lemma:chernoff} is two-tailed.
\end{proof}

\begin{theorem}\label{thm:azuma}(Azuma-Hoeffding Inequality; Theorem 5.8 of \cite{DP09})
Let $X_0, X_1, \ldots$ be a martingale and let $b_1,b_2,\ldots$ be a sequence of non-negative constants such that $|X_i-X_{i-1}|\leq b_i$ for each $i\geq1$.
Then, \[
    \Pr{X_n > X_0 + t} \leq \Exp{-\frac{t^2}{2\gb_n}}
\quad\text{and}\quad
\Pr{X_n < X_0 - t} \leq \Exp{-\frac{t^2}{2\gb_n}}
\]
where $\gb_n = \sum_{i=1}^n b_i^2$.
\end{theorem}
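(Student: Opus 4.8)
The plan is to prove this classical concentration inequality by combining the exponential moment method with an iterated conditioning argument tailored to the martingale structure. Write $D_i := X_i - X_{i-1}$ for the martingale differences, so that $X_n - X_0 = \sum_{i=1}^n D_i$, and let $\cF_{i-1}$ denote the $\gs$-algebra generated by $X_0,\ldots,X_{i-1}$. The martingale property gives $\E{D_i \mid \cF_{i-1}} = 0$, while the hypothesis supplies the deterministic bound $|D_i| \leq b_i$ almost surely. I would first apply the exponential form of Markov's inequality: for any $\gl > 0$,
\[
  \Pr{X_n - X_0 > t}
  = \Pr{e^{\gl(X_n - X_0)} > e^{\gl t}}
  \leq \Exp{-\gl t}\cdot\E{e^{\gl(X_n - X_0)}},
\]
so that everything reduces to controlling the moment generating function $\E{e^{\gl(X_n-X_0)}}$.

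The key technical ingredient is \emph{Hoeffding's lemma}: if $Y$ has $\E{Y} = 0$ and $Y \in [-b,b]$, then $\E{e^{\gl Y}} \leq \Exp{\gl^2 b^2/2}$. I would establish this by using convexity of $y \mapsto e^{\gl y}$ to write $e^{\gl y} \leq \frac{b-y}{2b}\,e^{-\gl b} + \frac{b+y}{2b}\,e^{\gl b}$ on $[-b,b]$, taking expectations so the linear term vanishes (as $\E{Y}=0$), and then bounding $\cosh(\gl b) \leq \Exp{\gl^2 b^2/2}$ by comparing Taylor coefficients, since $(2k)! \geq 2^k k!$ term by term. With this in hand I would peel off the differences one at a time. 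Conditioning on $\cF_{n-1}$, the variable $D_n$ has conditional mean zero and lies in $[-b_n, b_n]$ with $b_n$ deterministic, so the conditional form of Hoeffding's lemma gives $\E{e^{\gl D_n} \mid \cF_{n-1}} \leq \Exp{\gl^2 b_n^2/2}$; since $e^{\gl \sum_{i<n} D_i}$ is $\cF_{n-1}$-measurable, the tower property yields
\[
  \E{e^{\gl(X_n - X_0)}}
  = \E{e^{\gl \sum_{i=1}^{n-1} D_i}\,\E{e^{\gl D_n}\mid\cF_{n-1}}}
  \leq \Exp{\tfrac{\gl^2 b_n^2}{2}}\cdot\E{e^{\gl \sum_{i=1}^{n-1} D_i}}.
\]
Iterating this bound down to $i=1$ produces $\E{e^{\gl(X_n-X_0)}} \leq \Exp{\gl^2 \gb_n/2}$, where $\gb_n = \sum_{i=1}^n b_i^2$.

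Substituting back gives $\Pr{X_n - X_0 > t} \leq \Exp{-\gl t + \gl^2 \gb_n/2}$, and optimizing the free parameter by taking $\gl = t/\gb_n$ yields the claimed bound $\Exp{-t^2/(2\gb_n)}$. The lower-tail estimate $\Pr{X_n < X_0 - t}$ then follows by applying the identical argument to the martingale $(-X_i)_{i\geq 0}$, whose differences satisfy the same constraint $|(-X_i)-(-X_{i-1})| \leq b_i$. The main obstacle is Hoeffding's lemma, and specifically the need to invoke it \emph{conditionally} at each step: this is exactly the point where it is essential that the $b_i$ are deterministic constants rather than random quantities, so that the conditional moment generating function of $D_i$ admits a bound uniform over $\cF_{i-1}$, which is what makes the iterated product telescope cleanly.
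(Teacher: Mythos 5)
The paper offers no proof of this statement at all---it is imported verbatim as Theorem 5.8 of \cite{DP09} and used as a black box---so the only meaningful comparison is with the standard argument in the literature, which is what the cited source gives. Your proof is exactly that standard argument (exponential Markov bound, conditional Hoeffding's lemma established via convexity and $\cosh(\gl b)\leq\Exp{\gl^{2}b^{2}/2}$, tower-property telescoping, then the choice $\gl=t/\gb_{n}$, with the lower tail by passing to $(-X_{i})$), and it is correct, including the key observation that the $b_{i}$ being deterministic is what lets the conditional bound telescope; the only omitted points are the trivial degenerate cases $t\leq 0$ or $\gb_{n}=0$, where the claim is immediate.
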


We use the following elementary lemma; we omit the proof.
\begin{lemma}\label{lemma:4}
  Let $0\leq x_0,\ldots,x_m\leq1$ such that $x_0 + \cdots + x_m=1$ and
  $x_k=x_{m-k}$ for $k=0,\ldots,m$.
  Then we have $\sum_{k=0}^{m}kx_k = m/2$.
\end{lemma}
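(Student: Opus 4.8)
The plan is to exploit the reflection symmetry $x_k = x_{m-k}$ directly through a change of summation index, turning the identity into a self-referential equation for the quantity of interest. Write $S := \sum_{k=0}^{m} k\,x_k$. The idea is that the symmetry forces the "center of mass" of the weights $x_k$ to sit exactly at the midpoint $m/2$, which is precisely the asserted value.

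First I would reindex the sum by the substitution $k \mapsto m-k$. Since $k$ ranges over $\{0,1,\ldots,m\}$, so does $m-k$, and hence
\[
S = \sum_{k=0}^{m} k\,x_k = \sum_{k=0}^{m} (m-k)\,x_{m-k}.
\]
Next I would invoke the hypothesis $x_{m-k} = x_k$ to rewrite the right-hand side, and then split the resulting sum and apply the normalization $\sum_{k=0}^{m} x_k = 1$:
\[
S = \sum_{k=0}^{m} (m-k)\,x_k = m\sum_{k=0}^{m} x_k - \sum_{k=0}^{m} k\,x_k = m - S.
\]
Solving $S = m - S$ gives $2S = m$, hence $S = m/2$, as claimed.

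There is no genuine obstacle here; the argument is a one-line symmetrization, and I expect the write-up to be essentially the display above. The only point deserving a moment's care is the reindexing step: one should note that $k \mapsto m-k$ is a bijection of the index set $\{0,\ldots,m\}$ onto itself, so that no terms are lost or double-counted. The inequalities $0 \le x_k \le 1$ are not needed for the computation itself—only the normalization $\sum_k x_k = 1$ and the symmetry $x_k = x_{m-k}$ are actually used—so I would either drop them from the discussion or remark that they serve only to guarantee the $x_k$ form a genuine probability distribution in the intended application.
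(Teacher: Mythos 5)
Your proof is correct: the reindexing $k\mapsto m-k$ is a bijection of $\{0,\ldots,m\}$, the symmetry and normalization hypotheses yield $S=m-S$, and hence $S=m/2$. The paper itself omits the proof of this lemma (calling it elementary), so there is nothing to compare against; your one-line symmetrization is precisely the standard argument the authors had in mind, and your observation that the bounds $0\le x_k\le 1$ are never used is also accurate.
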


\begin{lemma}\label{lemma:2}
  Suppose $Q_0,Q_1,Q_2,\ldots$ is a sequence of random variables such that
  \begin{enumerate}[label=(\roman*)]
    \item\label{cond:l1} $Q_0=0$,
    \item\label{cond:l2} $\sPrg{Q_{t+1}}{Q_t,\ldots,Q_0}=\sPrg{Q_{t+1}}{Q_t}$,
    \item\label{cond:l3} $\sPrg{Q_{t+1}=Q_{t}+a}{Q_t}=\sPrg{Q_{t+1}=Q_{t}+(c_t-a)}{Q_t}$ for all $a=0,\ldots,c_t$, and
    \item\label{cond:l4} $\sPrg{Q_{t+1}=Q_{t}+a}{Q_t}=0$ for all $a\not\in\Set{0,\ldots,c_t}$,
  \end{enumerate}
  where $c_0,c_1,c_2,\ldots$ is a sequence of non-negative integers.
  Let $\gc_n=\sum_{i=0}^{n-1}c_i^2$.
  Then for all $\e>0$, \[
    \Pr{Q_n-\E{Q_n}\geq\e}\leq e^{-2\e^2/\gc_n}
    \quad\text{and}\quad
    \Pr{\E{Q_n}-Q_n\geq\e}\leq e^{-2\e^2/\gc_n}.
  \]
\end{lemma}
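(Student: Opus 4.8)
The plan is to realize $Q_n-\E{Q_n}$ as the terminal value of a martingale with bounded increments and then apply the Azuma--Hoeffding inequality (Theorem~\ref{thm:azuma}). The first step is to pin down the conditional law of each increment. Fix $t$, condition on $Q_t$, and write $p_a:=\sPrg{Q_{t+1}=Q_t+a}{Q_t}$. By~\ref{cond:l4} the numbers $p_0,\ldots,p_{c_t}$ are nonnegative and sum to $1$, and by~\ref{cond:l3} they satisfy $p_a=p_{c_t-a}$. Lemma~\ref{lemma:4}, applied with $m=c_t$ and $x_a=p_a$, then yields $\sum_{a=0}^{c_t}a\,p_a=c_t/2$; that is, $\sEg{Q_{t+1}-Q_t}{Q_t}=c_t/2$. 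Taking expectations and telescoping over $t$ gives $\E{Q_n}=\tfrac12\sum_{i=0}^{n-1}c_i$.

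Next I would introduce the centered process $M_t:=Q_t-\tfrac12\sum_{i=0}^{t-1}c_i$, so that $M_0=Q_0=0$ and $M_n=Q_n-\E{Q_n}$. Because $M_t$ differs from $Q_t$ by a deterministic constant, conditioning on $M_t,\ldots,M_0$ is the same as conditioning on $Q_t,\ldots,Q_0$; the Markov property~\ref{cond:l2} together with the increment computation then gives $\sEg{M_{t+1}}{M_t,\ldots,M_0}=\sEg{Q_{t+1}}{Q_t}-\tfrac12\sum_{i=0}^{t}c_i=(Q_t+c_t/2)-\tfrac12\sum_{i=0}^{t}c_i=M_t$, so $M_0,M_1,\ldots$ is a martingale. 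Its increments are bounded: since $Q_{t+1}-Q_t\in\{0,\ldots,c_t\}$ by~\ref{cond:l4}, we have $|M_{t+1}-M_t|=|(Q_{t+1}-Q_t)-c_t/2|\leq c_t/2$, so I may take $b_i=c_{i-1}/2$ in Theorem~\ref{thm:azuma}.

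Finally I would invoke Azuma--Hoeffding. With these constants $\gb_n=\sum_{i=1}^n b_i^2=\tfrac14\sum_{j=0}^{n-1}c_j^2=\gc_n/4$, so taking the deviation parameter equal to $\e$ gives $\Pr{M_n>\e}\leq\Exp{-\e^2/(2\gb_n)}=\Exp{-2\e^2/\gc_n}$, while the symmetric tail bound of Theorem~\ref{thm:azuma} gives $\Pr{M_n<-\e}\leq\Exp{-2\e^2/\gc_n}$. Since $M_n=Q_n-\E{Q_n}$ and $M_0=0$, these are precisely the two claimed estimates; the passage from the strict inequalities of Theorem~\ref{thm:azuma} to the non-strict ones in the statement is handled by applying each bound at a parameter $\e'<\e$ and letting $\e'\uparrow\e$. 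The one step that demands care is the martingale verification: one must check that subtracting the deterministic partial sum $\tfrac12\sum_{i}c_i$ exactly cancels the drift $c_t/2$ extracted from Lemma~\ref{lemma:4}, and that~\ref{cond:l2} legitimately lets us replace the full history by $Q_t$ alone. Everything else is routine bookkeeping.
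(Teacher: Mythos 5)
Your proof is correct and follows essentially the same route as the paper's: both use Lemma~\ref{lemma:4} to extract the drift $c_t/2$, center $Q_t$ to obtain a martingale (your $M_t$ coincides with the paper's $Z_t=Q_t-\E{Q_t}$, since $\E{Q_t}=\tfrac12\sum_{i=0}^{t-1}c_i$), bound the increments by $c_{t-1}/2$, and apply Theorem~\ref{thm:azuma} with $\gb_n=\gc_n/4$. Your explicit handling of the Markov property in the martingale verification and of the strict-versus-non-strict inequality via $\e'\uparrow\e$ is slightly more careful than the paper's write-up, but it is the same argument.
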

\begin{proof}
  Firstly,
  \begin{equation}\label{eq:A5}
    \sum_{a=0}^{c_t}a\cdot\sPrg{Q_{t+1}=Q_t+a}{Q_t}
    =c_t/2
  \end{equation}
  by Lemma \ref{lemma:4}.
  From this,
  \begin{equation}\label{eq:A3}
    \sEg{Q_{t+1}}{Q_t}
    =Q_t+\sum_{a=0}^{c_t}a\cdot\sPrg{Q_{t+1}=Q_t+a}{Q_t}
    =Q_t+c_t/2.
  \end{equation}
  Thus, by the law of total expectation,
  \begin{equation}\label{eq:A4}
    \E{Q_{t+1}}=\E{\sEg{Q_{t+1}}{Q_t}}=\E{Q_t}+c_t/2.
  \end{equation}
  Letting $Z_t:=Q_t-\E{Q_t}$ with $Z_0=Q_0$, then $(Z_t)_{t\geq0}$ is a martingale.
  
  Also, $\E{Z_t}=0$ for each $t$, using \ref{cond:l1} for $t=0$.
  Since $Q_t-Q_{t-1}\in\Set{0,\ldots,c_{t-1}}$ and
  $\E{Q_t}-\E{Q_{t-1}}=c_{t-1}/2$,
  \begin{align*}
    |Z_t-Z_{t-1}|
    &=|(Q_t-Q_{t-1})+(\E{Q_{t-1}}-\E{Q_t})|
    \leq c_{t-1}/2. \\
  \end{align*}
  Applying Theorem \ref{thm:azuma} to $(Z_t)_{t\geq0}$ with the bounded differences $b_t := c_{t-1}/2$ for $t\geq1$ gives that
  $\Pr{Z_n\geq \e}$ and $\Pr{Z_n\leq -\e}$ are both bounded above by $e^{-2\e^2/\gc_n}$.
  By the definition of $Z_n$, we have that $\Pr{Q_n-\E{Q_n}\geq\e}=\Pr{Z_n\geq\e}$ and
  $\Pr{\E{Q_n}-Q_n\geq\e}=\Pr{Z_n\leq -\e}$.
\end{proof}

\begin{lemma}\label{lemma:3}
  Let $\Set{X_i}_{i\in\gC}$ be a set of random variables.
  Let $\Set{f_i}_{i\in\gC}$ be increasing functions over the non-negative reals.
  Let $(\gC^+,\gC^-)$ be a partition of $\gC$.
  For any set $\Set{\e_i}_{i\in\gC}$ of non-negative reals and any real number $r$ that satisfy
  \begin{enumerate}[label=(\roman*)]
    \item $\e_a\leq\E{X_a}$ for all $a\in\gC^+$,
    \item $\e_b\geq-\E{X_b}$ for all $b\in\gC^-$, and
    \item $\sum_{a\in\gC^+}f_a(\E{X_a}-\e_a)-\sum_{b\in\gC^-}f_b(\E{X_b}+\e_b) > r$,
  \end{enumerate}
  we have \[
    \Pr{\sum_{a\in\gC^+}f_a(X_a)-\sum_{b\in\gC^-}f_b(X_b)\leq r}
    \leq \sum_{a\in\gC^+}\Pr{\E{X_a}-X_a\geq\e_a}+\sum_{b\in\gC^-}\Pr{X_b-\E{X_b}\geq\e_b}.
  \]
\end{lemma}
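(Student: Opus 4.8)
The plan is to control the target event by a union of one-sided deviation events, one for each index, and then apply the union bound; monotonicity of the $f_i$ is exactly what makes this reduction work. Concretely, I would introduce the \emph{good event}
\[
  \cE := \Set{X_a\geq\E{X_a}-\e_a\text{ for all }a\in\gC^+,\ \ X_b\leq\E{X_b}+\e_b\text{ for all }b\in\gC^-},
\]
and argue that on $\cE$ the quantity $\sum_{a\in\gC^+}f_a(X_a)-\sum_{b\in\gC^-}f_b(X_b)$ must strictly exceed $r$, so that the event $\Set{\sum_{a\in\gC^+}f_a(X_a)-\sum_{b\in\gC^-}f_b(X_b)\leq r}$ is contained in the complement $\cE^c$.

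To verify this containment I would first observe that the reference points lie in the common domain of the $f_i$: hypothesis (i) gives $\E{X_a}-\e_a\geq0$ for $a\in\gC^+$ and hypothesis (ii) gives $\E{X_b}+\e_b\geq0$ for $b\in\gC^-$, so monotonicity of the increasing functions may be applied at these points. On $\cE$, increasingness of each $f_a$ yields $f_a(X_a)\geq f_a(\E{X_a}-\e_a)$ and increasingness of each $f_b$ yields $f_b(X_b)\leq f_b(\E{X_b}+\e_b)$; summing the former over $\gC^+$, subtracting the sum of the latter over $\gC^-$, and invoking hypothesis (iii) produces the chain
\[
  \sum_{a\in\gC^+}f_a(X_a)-\sum_{b\in\gC^-}f_b(X_b)
  \geq \sum_{a\in\gC^+}f_a(\E{X_a}-\e_a)-\sum_{b\in\gC^-}f_b(\E{X_b}+\e_b)
  > r,
\]
which establishes the desired containment $\Set{\sum_{a\in\gC^+}f_a(X_a)-\sum_{b\in\gC^-}f_b(X_b)\leq r}\subseteq\cE^c$.

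It then remains to bound $\Pr{\cE^c}$. Writing $\cE^c=\bigcup_{a\in\gC^+}\Set{X_a<\E{X_a}-\e_a}\cup\bigcup_{b\in\gC^-}\Set{X_b>\E{X_b}+\e_b}$ and applying the union bound gives the sum of the probabilities $\Pr{X_a<\E{X_a}-\e_a}$ over $\gC^+$ and $\Pr{X_b>\E{X_b}+\e_b}$ over $\gC^-$; I would then relax each strict inequality to a weak one, using $\Pr{\E{X_a}-X_a>\e_a}\leq\Pr{\E{X_a}-X_a\geq\e_a}$ and the symmetric bound for $\gC^-$, to recover the stated right-hand side. Since the argument is essentially a formalized union bound, I do not anticipate a substantive obstacle; the only points demanding care are the domain check that places $\E{X_a}-\e_a$ and $\E{X_b}+\e_b$ in the range of the $f_i$ — precisely the role of hypotheses (i) and (ii) — and the cosmetic passage from strict to weak inequalities, which is what makes the resulting bound line up with the one-sided tail estimates supplied by Lemma~\ref{lemma:chernoff} and Lemma~\ref{lemma:2} when this lemma is applied later.
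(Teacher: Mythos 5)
Your proof is correct and follows essentially the same route as the paper: contain the event $\Set{\sum_{a\in\gC^+}f_a(X_a)-\sum_{b\in\gC^-}f_b(X_b)\leq r}$ in the complement of a good event via monotonicity of the $f_i$ and hypothesis (iii), then apply the union bound. The only (cosmetic) difference is that you define the good event with weak inequalities and later relax strict tail events to weak ones, whereas the paper uses strict inequalities in the good event so that its complement is directly the union of the weak-inequality tail events.
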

\begin{proof}
  We have
  \[
  \begin{split}
  \Pr{\sum_{a\in\gC^+}f_a(X_a)-\sum_{b\in\gC^-}f_b(X_b) > r} \qquad\qquad\qquad\qquad\qquad\qquad\qquad\qquad\qquad\qquad\qquad\qquad \\
  \begin{aligned}
    &\geq\Pr{\left(\bigcap_{a\in\gC^+}\Set{X_a>\E{X_a}-\e_a}\right)\cap
             \left(\bigcap_{b\in\gC^-}\Set{X_b<\E{X_b}+\e_b}\right)} \\
    &=\Pr{\left(\bigcap_{a\in\gC^+}\Set{\E{X_a}-X_a<\e_a}\right)\cap
             \left(\bigcap_{b\in\gC^-}\Set{X_b-\E{X_b}<\e_b}\right)} \\
    &=1-\Pr{\left(\bigcup_{a\in\gC^+}\Set{\E{X_a}-X_a\geq\e_a}\right)\cup
             \left(\bigcup_{b\in\gC^-}\Set{X_b-\E{X_b}\geq\e_b}\right)} \\
    &\geq1-\sum_{a\in\gC^+}\Pr{\E{X_a}-X_a\geq\e_a}
          -\sum_{b\in\gC^-}\Pr{X_b-\E{X_b}\geq\e_b}.
  \end{aligned}
  \end{split}
  \]
  The first step uses the fact that $\Set{f_i}_{i\in\gC}$ is a set of increasing functions;
  the last step uses the union bound.
  Taking the probability complement of this yields our desired result.
\end{proof}


\section{The infinite families}
\label{sec:combmodels}

The finite, irreducible Coxeter groups of types $A_{n-1}, B_n$, and $D_n$ have
combinatorial interpretations in terms of permutations.
We follow the notation and repeatedly use results from \cite{BB06} in this section.
The type $A_{n-1}$ Coxeter group is isomorphic to the symmetric group $S_n$
under the map that sends each $s_i\in S$ to the simple transposition $(i, i+1)$.
Thus the order of $A_{n-1}$ is $n!$.
For any bijection $w$ on a subset $S \subset \mathbb{Z}$, let the number of inversions be
\[
  \mathrm{inv}(w):=\#\{(i,j)\mid w(i) > w(j), 1\leq i<j\leq n\}.
\]
The length of an element of type $A_{n-1}$ is
equal to the number of inversions of $w$ (thinking of $w$ as an element of $S_n$);
that is, $\ell_A(w) = \mathrm{inv}(w)$.

The type $B_{n}$ Coxeter group is isomorphic to the group of \textit{signed} permutations,
$S_n^B$, which is the collection of bijections $w$ on the set
$[\pm n]=\Set{-n,\cdots,-1,1,\cdots,n}$ with the property that $w(-i)=-w(i)$
for every $i\in[\pm n]$, under the binary operation of function composition.
The order of this group is $2^nn!$.
The length of $w \in S_n^B$ can be expressed as \[
  \ell_B(w)=\mathrm{inv}_B(w):=\mathrm{inv}(w) +\mathrm{nsp}(w) +\mathrm{neg}(w)
\]
where
\begin{align*}
\mathrm{nsp}(w) & :=\#\{(i, j)\mid w(i)+w(j)<0, 1\leq i<j\leq n\}, \\
\mathrm{neg}(w) & :=\#\{i\mid w(i)<0, 1\leq i\leq n\}.
\end{align*}
The type $D_{n}$ Coxeter group is isomorphic to the subgroup $S_n^D \subset S_n^B$ containing all $w$ such that $\mathrm{neg}(w)$ even.
The order of this group is $2^{n-1}n!$.
The length of $w \in S_n^D$ can be expressed as \[
  \ell_D(w) = \mathrm{inv}_D(w):=
  \mathrm{inv}_B(w)-\mathrm{neg}(w)
  =\mathrm{inv}(w)+\mathrm{nsp}(w).
\]
For any bijection $w$ on a subset $S \subset \mathbb{Z}$, let
\[
  \mathrm{des}(w):=\#\{i \mid\iv{w}(i) > \iv{w}(i+1), 1\leq i < n\}.
\]
The number of left descents (Definition \ref{def:left-descents}) of an
element $w$ in a Coxeter group of type $A_{n-1}$, $B_n$, and
$D_n$ can be computed, respectively, as
\begin{itemize}
  \item[(i)] $d_A(w)=\mathrm{des}(w)$, 
  \item[(ii)] $d_B(w)=\mathrm{des}_B(w):=\mathrm{des}(w)+ \mathbbm1[0 > \iv w(1)]$, 
  \item[(iii)] $d_D(w)=\mathrm{des}_D(w):=\mathrm{des}(w)+ \mathbbm1[\iv w(-2)>\iv w(1)]$, 
\end{itemize}
where $\mathbbm1[P]$ is the \emph{Iverson bracket} defined as
\[
\mathbbm1[P] = \begin{cases} 1 & P\text{ is true}; \\ 0 & \text{otherwise}. \\ \end{cases}
\]
In the analysis that follows, the \emph{asymptotic substitution} $f(n)=O(g(n))$
means $f(n)\leq c\cdot g(n)$ for all $n\geq N$ where $N$ and $c$ are some
absolute positive constants with respect to $n$.
We write $f(n)=\gO(g(n))$ to mean $g(n)=O(f(n))$.
When either $O(\cdot)$ or $\gO(\cdot)$ appears as part of an inequality,
for all possible asymptotic substitutions on the left hand side, there must exist
asymptotic substitutions on the right hand side such that the inequality holds.
The general technique will be to generate a uniformly at random Coxeter element
and bound its Coxeter related statistics with high probability.

The \textit{one-line notation} of a permutation $w\in S_n$ is the string
$w(1)\cdots w(n)$.
For an element $w$ in either $S_n^B$ or $S_n^D$, its one-line notation is
$(w(-n),\ldots,w(-1),w(+1),\ldots,w(+n))$, and has indices in $[\pm n]$.

\section{Limiting behavior for the proportion of proper elements}\label{section:5}
In \cite{BHY20}, it was shown that the proportion of proper \textit{permutations} is
$O(\iv n)$ and hence is asymptotically zero.
In this section, we show that every group of type $A_{n-1}$ has an
$e^{-\gO(n)}$ proportion of proper elements.
We then give similar asymptotic results for types $B_n$ and $D_n$.

\subsection{Type $A_{n-1}$} We begin with type $A_{n-1}$ Coxeter groups.

\begin{proposition}\label{prop:an}
The number of proper elements in the type $A_{n-1}$ Coxeter group is at most
$n!\cdot e^{-\gO(n)}$.
\end{proposition}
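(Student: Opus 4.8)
The plan is to recall that in type $A_{n-1}$, an element $w$ is proper iff $\ell_A(w) \le n + \binom{d_A(w)+1}{2}$ (using $r = n-1$ and Proposition~\ref{prop:maxwAnalysis}(i), where $\mathsf{maxw}_0(W,x) = \binom{x+1}{2}$), so I want to show that the probability a uniformly random $w \in S_n$ satisfies $\mathrm{inv}(w) \le n + \binom{\mathrm{des}(w)+1}{2}$ is $e^{-\Omega(n)}$. The intuition is that a typical permutation has $\mathrm{des}(w) \approx n/2$ and $\mathrm{inv}(w) \approx n^2/4$; the properness bound forces either the number of descents to be atypically small or the number of inversions to be atypically small, and both events are exponentially unlikely. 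The strategy is to apply the concentration machinery of Section~\ref{sec:prob} to an appropriate random process generating $w$, then invoke Lemma~\ref{lemma:3} to combine the length lower bound and descent upper bound into the single properness inequality.

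First I would set up a sequential model for a uniformly random $w \in S_n$: insert the values $1, 2, \ldots, n$ one at a time (or equivalently choose $w(t)$ given a uniformly random relative order), so that the Lehmer code / inversion-table entries become independent. Concretely, writing $\mathrm{inv}(w) = \sum_{t} L_t$ where $L_t \in \{0, 1, \ldots, t-1\}$ are the independent, uniformly distributed inversion-table coordinates, I get a length statistic that fits the martingale hypotheses of Lemma~\ref{lemma:2}: conditioned on the past, $Q_{t+1} - Q_t$ is uniform on $\{0,\ldots,c_t\}$ with $c_t = t$, so it is symmetric about $c_t/2$, giving $\mathbb{E}[\mathrm{inv}(w)] = \sum_t t/2 = \Theta(n^2)$ and, with $\gamma_n = \sum c_t^2 = \Theta(n^3)$, a lower-tail bound $\Pr{\mathbb{E}[\mathrm{inv}] - \mathrm{inv} \ge \e} \le e^{-2\e^2/\gamma_n}$. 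Choosing $\e = \Theta(n^2)$ yields $e^{-\Omega(n)}$ for the event that $\mathrm{inv}(w)$ is much smaller than its mean.

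Next I would control the descents. Here $\mathrm{des}(w) = \sum_{i=1}^{n-1} \mathbbm1[w^{-1}(i) > w^{-1}(i+1)]$ is a sum of Bernoulli($1/2$) indicators; these are not independent, but the standard trick is to split $[n-1]$ into $k$ blocks of consecutive positions whose indicators are mutually independent within each block (adjacent descent indicators share a position and are dependent, but indicators at distance $\ge 2$ are independent, so two or three interleaved classes suffice), which is exactly the hypothesis structure of Lemma~\ref{lemma:1}. That gives $\mathbb{E}[\mathrm{des}(w)] = (n-1)/2$ and an upper-tail bound $\Pr{\mathrm{des}(w) - \mathbb{E}[\mathrm{des}] > \delta n} \le k e^{-\delta^2(n-2)/(2k)} = e^{-\Omega(n)}$, so with high probability $\mathrm{des}(w) \le n/2 + \delta n$, and hence $\binom{\mathrm{des}(w)+1}{2} \le \tfrac{1}{2}(1/2+\delta)^2 n^2 + O(n)$.

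Finally I would assemble these two one-sided bounds through Lemma~\ref{lemma:3} (with $f$ taken so that the properness inequality $\mathrm{inv}(w) - \binom{\mathrm{des}(w)+1}{2} \le n$ becomes the event being bounded): on the complementary high-probability event, $\mathrm{inv}(w)$ is close to $n^2/4$ while $\binom{\mathrm{des}(w)+1}{2}$ is at most roughly $\tfrac{1}{2}(1/2+\delta)^2 n^2$, so choosing $\delta$ small forces $\mathrm{inv}(w) - \binom{\mathrm{des}(w)+1}{2} \ge c n^2 > n$ for large $n$, contradicting properness. Thus every proper element lies in the union of the two rare events, and the number of proper elements is at most $n! \cdot e^{-\Omega(n)}$. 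The main obstacle I anticipate is bookkeeping the constants so that the mean of $\binom{\mathrm{des}+1}{2}$ (a convex function of the random variable $\mathrm{des}$) stays strictly below the mean of $\mathrm{inv}$ with a positive $\Theta(n^2)$ gap after accounting for the allowed deviations $\delta n$ and $\e$; this requires verifying $\tfrac{1}{4} > \tfrac{1}{2}(1/2+\delta)^2$ for $\delta$ small, i.e. that the descent contribution cannot on its own reach the inversion mean, and then certifying the hypotheses of Lemma~\ref{lemma:3} (the monotonicity of $f_a$ and the quantitative gap condition~(iii)) with these explicit choices.
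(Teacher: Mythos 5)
Your proposal is correct and follows essentially the same route as the paper's proof: an upper tail for $\mathrm{des}(w)$ via interleaved independent indicator classes and Lemma~\ref{lemma:1}, a lower tail for $\mathrm{inv}(w)$ via the sequential-insertion martingale and Lemma~\ref{lemma:2} (your Lehmer-code formulation is exactly the paper's insertion process, where uniform independent increments on $\{0,\ldots,t\}$ play the role of the inversion-table entries), and Lemma~\ref{lemma:3} with $f_1(x)=x$, $f_2(x)=\binom{x+1}{2}$ to combine the two tails into the properness event. The only discrepancy is cosmetic: properness for $A_{n-1}$ reads $\ell(w)\leq (n-1)+\binom{d(w)+1}{2}$ rather than $n+\binom{d(w)+1}{2}$, but since you bound a superset this does not affect the argument.
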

\begin{proof}
  For $1\leq i<j\leq n$, let $X_{i,j}^{(n)}\colon S_n\to\Set{0,1}$ be the function
  that maps $w\in S_n$ to $1$ if $\iv w(i)>\iv w(j)$ and $0$ otherwise.
  Then \[
      {\rm inv}(w)=\sum_{i=1}^n\sum_{j=i+1}^nX_{i,j}^{(n)}(\iv w)
      \quad \text{ and } \quad
      {\rm des}(w)=\sum_{i=1}^{n-1}X_{i,i+1}^{(n)}(w)
  \]
  for $w\in S_n$.
  Further, $\mathbb{E}_w[{X_{i,j}^{(n)}(w)}]=1/2$ via symmetry.
  Thus $\mathbb{E}_w[{{\rm inv}(w)}]=\binom{n}2/2$
  and $\mathbb{E}_w[{\rm des}(w)]=(n-1)/2$ by the linearity of expectation.
  We also have that the random variables $X_{i,j}^{(n)}$ and $X_{i',j'}^{(n)}$ are independent when $\Set{i,j}\cap\Set{i',j'}=\emptyset$.

  Let $\gC=\Set{(i,j)\mid 1\leq i<j\leq n}$
  with $\gC_1=\{(i,i+1)\mid 1\leq i<n, i\equiv 1\Mod2\}$
  and $\gC_2=\{(i,i+1)\mid 1\leq i<n, i\equiv 0\Mod2\}$.
  Notice $\gC_1$ and $\gC_2$ are disjoint subsets of $\gC$,
  and that both $\{X_{a}^{n}\}_{a\in\gC_1}$
  and $\{X_{b}^{n}\}_{b\in\gC_2}$
  are sets of independent random variables.
  Further, $|\gC_1|+|\gC_2|=n-1$ with $|\gC_1|=\Ceil{(n-1)/2}$
  and $|\gC_2|=\Floor{(n-1)/2}$.
  Also, \[
      \sum_{a\in\gC_1}X_{a}^{(n)}(w)+\sum_{b\in\gC_2}X_b^{(n)}(w)
      =\sum_{i=1}^{n-1}X_{i,i+1}^{(n)}(w)
      ={\rm des}(w)
  \]
  for $w\in S_n$.
  Thus Lemma \ref{lemma:1} with $\gd=\e/n$ gives us
  \begin{equation}\label{eq:A1}
    \Pr{{\rm des}(w)-\E{{\rm des}(w)}\geq\e}
    \leq 2\Exp{-\frac{\e^2(n-2)}{4n^2}}
  \end{equation}
  for any $\e>0$ when $w$ is sampled uniformly at random from $S_n$.

  Next, let $\gp_0,\gp_1,\gp_2,\ldots$ be a sequence of random permutations,
  evolving such that given $\gp_{t-1}\in S_{t-1}$, we create $\gp_{t}$ by inserting
  $t$ at a (uniformly) random index in $\gp_{t-1}$, such that
  ${\mathrm{Pr}[\gp_t(i)=t\mid\gp_{t-1}]}=1/t$
  for each $i\in [t]$.
  Let $Q_t:=\mathrm{inv}(\gp_t)$.
  From this, we see that $\sPrg{Q_{t+1}=Q_{t}+a}{Q_t}=1/(t+1)$ for $a=0,\ldots,t$.
  Since the distribution is uniform, it is symmetric about $a=t/2$.
  Then setting $c_t=t$, Lemma \ref{lemma:2} gives
  \begin{equation}\label{eq:preA2}
    \Pr{\E{Q_n}-Q_n\geq\e}
    \leq \Exp{-\frac{12\e^2}{n(n-1)(2n-1)}}
  \end{equation}
  for any $\e>0$.

  \noindent Notice that ${\rm inv}(w_1)$ and ${\rm inv}(\iv{w_2})$ are identically distributed when
  $w_1$ and $w_2$ are independently sampled uniformly at random from $S_n$ since \[
    {\rm inv}(w) = \ell_A(w) = \ell_A(\iv w) = {\rm inv}(\iv w).
  \]
  If $\gp_t$ is the evolved permutation after $t$ insertions,
  then $\gp_{t-1}$ is completely determined by $\gp_t$.
  Further, there is only one insertion of $t$ that evolves $\gp_{t-1}$ into
  $\gp_t$.
  The conclusion, via induction, is that the probability of seeing $\gp_t$ after $t$ insertions
  is $1/t!$. The above implies that $Q_n$ has an identical distribution to
  ${\rm inv}(\iv w)$, and hence ${\rm inv}(w)$, when $w$ is sampled uniformly at random from $S_n$.
  This, combined with \eqref{eq:preA2}, implies
  \begin{equation}\label{eq:A2}
    \mathrm{Pr}_w{[\E{\rm inv(w)}-{\rm inv(w)}\geq\e]}
    \leq \Exp{-\frac{12\e^2}{n(n-1)(2n-1)}}
  \end{equation}
  for any $\e>0$.

  Finally, take $f_1$ and $f_2$ to be functions over the reals with $f_1(x)=x$
  and $f_2(x)=(x+1)x/2$.
  Also, let $r=n-1$, $\e_1=n^2/16$, and $\e_2=n/16$.
  Using Lemma \ref{lemma:3}, for sufficiently large $n$, with
  \eqref{eq:A1} and \eqref{eq:A2} gives
  \begin{align*}
    \#\Set{w\in S_n\mid w\text{ is proper}}
    &=|S_n|\cdot\mathop{\mathrm{Pr}}\limits_{w\gets S_n}[w \text{ is proper}] \\
    &=n!\cdot\mathop{\mathrm{Pr}}\limits_{w\gets S_n}\left[{\rm inv}(w)\leq(n-1)+\binom{{\rm des}(w)+1}2\right] \\
    &\leq n!\cdot\left[2\Exp{-\frac{n-2}{256}} + \Exp{-\frac{3n^3}{64(n-1)(2n-1)}} \right] \\
    &\leq n!\cdot e^{-\gO(n)}.
  \end{align*}\end{proof}

\begin{corollary}
\label{cor:asymptoticA}
  The proportion of proper elements in the Coxeter group of type $A_{n-1}$
  vanishes as $n$ goes to infinity.
\end{corollary}
\begin{proof}
  We know the type $A_{n-1}$ Coxeter group has group structure $S_n$.
  Also, $|S_n|=n!$.
  Thus, the claim follows from Proposition \ref{prop:an}
  since the proportion of proper elements is \[
    \mathop{\mathrm{Pr}}\limits_{w\gets S_n}[w \text{ is proper}]
    \leq e^{-\gO(n)}
  \]
  which tends to zero as $n$ goes to infinity.
\end{proof}

\subsection{Types $B_n$ and $D_n$}
Next, we give a bound involving type $A_{n-1}$ left descents, type $B_n$ elements,
and type $D_n$ inversions.
We do this to consolidate our bounds in types $B_n$ and $D_n$.
Types $B_n$ and $D_n$ have asymptotically the same number of elements, so their
analysis is similar.
In providing the following, we upper bound a large set and then later show that
this set is a superset of both the set of type $B_n$ and type $D_n$ proper
elements.

\begin{proposition}\label{prop:snb}
  We have that \[
    \#\Set{w\in S_n^B\mid \mathrm{inv}_D(w)\leq n+(\mathrm{des}(w)+1)^2}
    \leq 2^nn!\cdot e^{-\gO(n)}.
  \]
\end{proposition}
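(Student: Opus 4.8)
The plan is to follow the template of the type $A$ argument in Proposition~\ref{prop:an}: sample $w$ uniformly from $S_n^B$, bound the probability of the ``bad'' event $\mathrm{inv}_D(w)-(\mathrm{des}(w)+1)^2\le n$, and multiply by $|S_n^B|=2^nn!$. Writing this event as $f_1(\mathrm{inv}_D(w))-f_2(\mathrm{des}(w))\le r$ with $f_1(x)=x$, $f_2(x)=(x+1)^2$, and $r=n$, Lemma~\ref{lemma:3} (taking $\gC^+=\{\mathrm{inv}_D\}$ and $\gC^-=\{\mathrm{des}\}$) reduces the task to a \emph{lower}-tail bound for $\mathrm{inv}_D(w)$ and an \emph{upper}-tail bound for $\mathrm{des}(w)$. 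First I would record the means $\E{\mathrm{inv}_D(w)}=\binom n2$ and $\E{\mathrm{des}(w)}=(n-1)/2$; these show the threshold $n+(\mathrm{des}(w)+1)^2\approx n^2/4$ sits strictly below the typical value $\mathrm{inv}_D(w)\approx n^2/2$, and this constant-factor gap is what forces exponential decay.

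For the descent statistic the key observation is that $\mathrm{des}(w)$ depends only on the relative order of $w^{-1}(1),\ldots,w^{-1}(n)$, and that this ranking is \emph{uniform over $S_n$} when $w$ is uniform over $S_n^B$. Indeed, grouping the $2^nn!$ admissible positive windows of $w^{-1}$ by their induced ranking $\gr\in S_n$, each $\gr$ is realized by exactly $2^n$ windows, one for each way of signing the absolute values $1,\ldots,n$. Hence $\mathrm{des}(w)$ has precisely the type $A$ distribution, and the Chernoff estimate from Proposition~\ref{prop:an} transfers verbatim: splitting $\mathrm{des}(w)$ into its two parity classes of adjacent-order indicators and applying Lemma~\ref{lemma:1} with $k=2$ and $\gd=\e_2/n$ gives $\Pr{\mathrm{des}(w)-\E{\mathrm{des}(w)}\ge\e_2}\le 2e^{-\e_2^2(n-2)/(4n^2)}$, which is $e^{-\gO(n)}$ once $\e_2=\gO(n)$.

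The new ingredient, and the step I expect to be the main obstacle, is the lower tail of $\mathrm{inv}_D$, for which I would run a \emph{signed insertion process}: build $w$ by inserting the letters of absolute value $1,2,\ldots,n$ in increasing order, at each step choosing a sign and one of the available gaps uniformly. Since the step producing the size-$(t+1)$ window offers $2(t+1)=|S_{t+1}^B|/|S_t^B|$ choices, the process yields a uniform element of $S_n^B$. Let $Q_t=\mathrm{inv}_D(\pi_t)$. Because $\pm(t+1)$ has the largest absolute value present, inserting $+(t+1)$ at a gap with $p-1$ elements to its left creates $(t+1)-p$ new inversions and \emph{no} new negative-sum pairs, while inserting $-(t+1)$ creates $p-1$ inversions and exactly $t$ negative-sum pairs; so the increment of $Q$ is $(t+1)-p$ in the first case and $(p-1)+t$ in the second. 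Ranging over the two signs and $p\in\{1,\ldots,t+1\}$, these increments sweep out $\{0,1,\ldots,2t\}$ with a distribution symmetric about its midpoint $t$ (the unique doubled value). This symmetry, which is exactly what licenses the Azuma-based Lemma~\ref{lemma:2}, is the crux of the argument. It verifies conditions (i)--(iv) of that lemma with $c_t=2t$, giving $\gc_n=\sum_{t=0}^{n-1}(2t)^2=\tfrac{2}{3}(n-1)n(2n-1)=\gTh(n^3)$ and $\E{Q_n}=\sum_{t=0}^{n-1}t=\binom n2$. As $Q_n$ and $\mathrm{inv}_D(w)$ are identically distributed, Lemma~\ref{lemma:2} yields $\Pr{\E{\mathrm{inv}_D(w)}-\mathrm{inv}_D(w)\ge\e_1}\le e^{-2\e_1^2/\gc_n}$, which is $e^{-\gO(n)}$ once $\e_1=\gO(n^2)$.

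Finally I would combine these with $\e_1=n^2/16$ and $\e_2=n/16$. Hypotheses (i)--(ii) of Lemma~\ref{lemma:3} are immediate, and for (iii) a short computation shows that $\bigl(\binom n2-\e_1\bigr)-\bigl(\tfrac{n-1}{2}+\e_2+1\bigr)^2$ has leading coefficient $\tfrac{7}{16}-\tfrac{81}{256}=\tfrac{31}{256}>0$, so it exceeds $r=n$ for all large $n$. Feeding the two tail bounds into Lemma~\ref{lemma:3} gives $\Pr{\mathrm{inv}_D(w)-(\mathrm{des}(w)+1)^2\le n}\le e^{-\gO(n)}$, and multiplying by $|S_n^B|=2^nn!$ yields the stated bound.
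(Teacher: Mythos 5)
Your proposal is correct and takes essentially the same route as the paper: the same reduction via Lemma~\ref{lemma:3} with $f_1(x)=x$, $f_2(x)=(x+1)^2$, $r=n$, $\e_1=n^2/16$, $\e_2=n/16$, the same signed-insertion martingale whose increments are symmetric on $\Set{0,\ldots,2t}$ (so Lemma~\ref{lemma:2} applies with $c_t=2t$), and the same parity-split Chernoff bound via Lemma~\ref{lemma:1} for the descent statistic. The only cosmetic difference is that you justify the descent tail by noting $\mathrm{des}(w)$ on $S_n^B$ has exactly the type $A$ distribution (a correct $2^n$-to-one counting argument), whereas the paper argues directly on $S_n^B$ using a value-swapping involution to get $\E{X_{i,j}^{(n)}}=1/2$ and independence of disjoint adjacent pairs.
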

\begin{proof}
  This proof is similar to the proof of Proposition \ref{prop:an}.
  For $1\leq i<j\leq n$, let $X_{i,j}^{(n)}\colon S_n^B\to\Set{0,1}$ be the
  function that maps $w\in S_n^B$ to $1$ if $\iv w(i)>\iv w(j)$ and $0$ otherwise.
  Similarly, let $Y_{i,j}^{(n)}\colon S_n^B\to\Set{0,1}$ be the function that
  maps $w\in S_n^B$ to $1$ if $\iv w(-i)>\iv w(j)$ and $0$ otherwise.

  Then \[
    {\rm inv}_D(w)=
        \sum_{i=1}^n\sum_{j=i+1}^nX_{i,j}^{(n)}(w)+Y_{i,j}^{(n)}(w)
  \]
  and 
  \[
    {\rm des}(w)=
        \sum_{i=1}^{n-1}X_{i,i+1}^{(n)}(w),
  \]
  for $w\in S_n^B$ since by \cite[p. 253 (8.18)]{BB06}
  \begin{align*}
    {\rm inv}_D(w)
    &=\mathrm{inv}_B(w)-\mathrm{neg}(w) \\
    &=\mathrm{inv}(w)+\#\{1\leq i<j\leq n\mid w(-i)>w(j)\} &\\
    &={\rm inv}_D(\iv w).
  \end{align*}

  For $i, j\in[n]$ with $i<j$, let $\iota_{i,j}: S_n^B \to S_n^B$ be the
  bijective involution that swaps the values at indices $i$ and $j$ in the one-line notation of $w\in S_n^B$.
  Then both $X_{i,j}^{(n)}(w) = 1 - X_{i,j}^{(n)}(\gi_{i,j}(w))$ and
  $Y_{i,j}^{(n)}(w) = 1 - Y_{i,j}^{(n)}(\gi_{i,j}(w))$.
  Hence $\mathbb{E}{X_{i,j}^{(n)}}=\mathbb{E}{Y_{i,j}^{(n)}}=1/2$ by symmetry.
  Thus, $\mathbb{E}[{\rm inv}_D(w)]=\binom{n}2$ and $\mathbb{E}[{\rm inv}_D(w)]=(n-1)/2$ by the
  linearity of expectation. Further, $X_{i,j}^{(n)}$ and $X_{i',j'}^{(n)}$ are independent if
  $\Set{i,j}\cap\Set{i',j'}=\emptyset$.

  Let $\gC=\Set{(i,j)\mid 1\leq i<j\leq n}$ with
  $\gC_1=\{(i,i+1)\mid 1\leq i<n, i\equiv 1\Mod2\}$
  and
  $\gC_2=\{(i,i+1)\mid 1\leq i<n, i\equiv 0\Mod2\}$.
  Notice $\gC_1$ and $\gC_2$ are disjoint subsets of $\gC$, and that both
  $\{X_{a}^{n}\}_{a\in\gC_1}$ and $\{X_{b}^{n}\}_{b\in\gC_2}$ are sets of
  independent random variables.
  Further, $|\gC_1|+|\gC_2|=n-1$ with $|\gC_1|=\Ceil{(n-1)/2}$ and
  $|\gC_2|=\Floor{(n-1)/2}$.
  Since for $w\in S_n^B$, $\sum_{a\in\gC_1}X_{a}^{(n)}(w)+\sum_{b\in\gC_1}X_b^{(n)}(w)=\sum_{i=1}^{n-1}X_{i,i+1}^{(n)}(w)={\rm des}(w)$,
  applying Lemma \ref{lemma:1} with $\gd=\e/n$ gives us
  \begin{equation}\label{eq:B1}
    \Pr{{\rm des}(w)-\E{{\rm des}(w)}\geq\e}
    \leq 2\Exp{-\frac{\e^2(n-2)}{4n^2}}
  \end{equation}
  for any $\e>0$ where $w$ is sampled uniformly from $S_n^B$.

  Next, let $\gp_0,\gp_1,\gp_2,\ldots$ be a sequence of random
  \textit{signed} permutations, evolving such that given $\gp_{t-1}\in S_{t-1}^B$,
  we create $\gp_{t}$ by inserting $t$ at a (uniformly) random index in
  $\gp_{t-1}$, such that ${\mathrm{Pr}[\gp_t(i)=t\mid\gp_{t-1}]}=1/2t$
  for each $i\in [\pm t]$. Then $-t$ is inserted at the unique index that makes $\gp_{t}$ a signed permutation.
  Let $Q_t:=\mathrm{inv}_D(\gp_t)$.

  If $t$ is placed in a positive index $k$, then
  $\gp_t(i)<\gp_t(k)$ and $\gp_t(-i)<\gp_t(k)$ for all $1\leq i<k$, and
  $\gp_t(k)>\gp_t(j)$ and $\gp_t(-k)<\gp_t(j)$ for all $j$ such that $k<j\leq t$.
  This leads to a total increase of $t-k$ to $Q_{t-1}$.
  If instead $-t$ is placed at a positive index $k$, then
  $\gp_t(i)>\gp_t(k)$ and $\gp_t(-i)>\gp_t(k)$ for all $1\leq i<k$, and
  $\gp_t(k)<\gp_t(j)$ and $\gp_t(-k)>\gp_t(j)$ for all $j$ such that $k<j\leq t$.
  This leads to a total increase of $2(k-1)+(t-k)=t+k-2$ to $Q_{t-1}$.

  The two possibilities are equally likely, as are their indices.
  For $k_+,k_-\in[t]$, $t-k_+=t+k_--2$ only if $t-k_+=t+k_--2=t-1$. Thus
  \begin{align*}
    \sPrg{Q_t=Q_{t-1}+a}{Q_{t-1}}=
    \begin{cases}
      1/t&\text{if }a=t-1 \\
      1/2t&\text{if }a\in(\Set{0,\ldots, 2(t-1)}\setminus\Set{t-1}) \\
      0&\text{otherwise}
    \end{cases}
  \end{align*}
  for each $t$.
  Thus,
  \begin{align*}
    \sPrg{Q_{t+1}=Q_{t}+a}{Q_{t}}=
    \begin{cases}
      \frac{1}{t+1}&\text{if }a=t \\
      \frac{1}{2(t+1)}&\text{if }a\in(\Set{0,\ldots,2t}\setminus\Set{t}) \\
      0&\text{otherwise}
    \end{cases}
  \end{align*}
  and so $\sPrg{Q_{t+1}=Q_{t}+a}{Q_{t}}=\sPrg{Q_{t+1}=Q_{t}+2t-a}{Q_{t}}$.
  Then setting $c_t=2t$, Lemma \ref{lemma:2} gives
  \begin{equation}\label{eq:preB2}
    \Pr{\E{Q_n}-Q_n\geq\e}
    \leq \Exp{-\frac{3\e^2}{n(n-1)(2n-1)}}
  \end{equation}
  for any $\e>0$.

  \noindent Notice that ${\rm inv}_D(w_1)$ and ${\rm inv}_D(\iv{w_2})$ are identically distributed when
  $w_1$ and $w_2$ are independently sampled uniformly at random from $S_n^B$ since
  ${\rm inv}_D(\iv w)={\rm inv}_D(w)$ for each $w\in S_n^B$.
  If $\gp_t$ is the evolved signed permutation after $t$ insertions,
  then $\gp_{t-1}$ is completely determined by $\gp_t$.
  Further, there is only one insertion of $t$ that evolves $\gp_{t-1}$ into
  $\gp_t$.
  The conclusion, via induction, is that the probability of seeing $\gp_t$ after
  $t$ insertions is $1/(2^tt!)$.
  The above implies that $Q_n$ has an identical distribution to
  ${\rm inv}_D(\iv w)$, and hence ${\rm inv}_D(w)$, when $w$ is sampled uniformly at random from
  $S_n^B$.
  This, combined with \eqref{eq:preB2}, implies that for $w$ sampled uniformly from $S_n^B$
  \begin{equation}\label{eq:B2}
    \Pr{\E{{\rm inv}_D(w)}-{\rm inv}_D(w)\geq\e}
    \leq \Exp{-\frac{3\e^2}{n(n-1)(2n-1)}}
  \end{equation}
  for any $\e>0$.

  Finally, take $f_1$ and $f_2$ to be functions over the reals with
  $f_1(x)=x$ and $f_2(x)=(x+1)^2$.
  Also, let $r=n$, $\e_1=n^2/16$, and $\e_2=n/16$.
  Using Lemma~\ref{lemma:3}, for sufficiently large $n$, with
  \eqref{eq:B1} and \eqref{eq:B2} gives \[
  \begin{split}
      \#\Set{w\in S_n^B\mid \mathrm{inv}_D(w)\leq n+(\mathrm{des}(w)+1)^2} \qquad\qquad\qquad\qquad\qquad\qquad\qquad\qquad\qquad\\
      \begin{aligned}      
        &=|S_n^B|\cdot\mathop{\mathrm{Pr}}\limits_{w\gets S_n^B}[\mathrm{inv}_D(w)\leq n+(\mathrm{des}(w)+1)^2] \\
        &\leq 2^nn!\cdot\left[2\Exp{-\frac{n-2}{256}} + \Exp{-\frac{3n^3}{256(n-1)(2n-1)}} \right] \\
        &\leq 2^nn!e^{-\gO(n)}.
      \end{aligned}
  \end{split}
  \]\end{proof}

\begin{proposition}\label{prop:bn}
  The number of proper elements in the Coxeter group of type $B_n$ is at most
  $2^nn!e^{-\gO(n)}$.
\end{proposition}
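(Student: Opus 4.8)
The plan is to realize the set of proper elements of $S_n^B$ as a subset of the large set already bounded in Proposition~\ref{prop:snb}, so that the desired estimate transfers with no further probabilistic work. By Proposition~\ref{prop:maxwAnalysis}(ii) the type $B_n$ Coxeter group has ${\sf maxw}_0(W,x)=x^2$, and its rank is $n$, so $w\in S_n^B$ is proper exactly when
\[
  \ell_B(w)\leq n+d_B(w)^2.
\]
Thus it suffices to check that every such $w$ satisfies $\mathrm{inv}_D(w)\leq n+(\mathrm{des}(w)+1)^2$, which is the defining condition of the set in Proposition~\ref{prop:snb}.

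First I would compare the two length statistics. Using the combinatorial formulas recalled in Section~\ref{sec:combmodels}, $\mathrm{inv}_D(w)=\mathrm{inv}_B(w)-\mathrm{neg}(w)=\ell_B(w)-\mathrm{neg}(w)$, and since $\mathrm{neg}(w)\geq0$ this immediately gives $\mathrm{inv}_D(w)\leq\ell_B(w)$. Second, I would bound the descent statistic: from $d_B(w)=\mathrm{des}(w)+\mathbbm1[0>\iv w(1)]$ the Iverson bracket contributes at most $1$, so $0\leq d_B(w)\leq\mathrm{des}(w)+1$, and squaring these nonnegative quantities yields $d_B(w)^2\leq(\mathrm{des}(w)+1)^2$.

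Chaining these two observations through the properness inequality gives, for every proper $w\in S_n^B$,
\[
  \mathrm{inv}_D(w)\leq\ell_B(w)\leq n+d_B(w)^2\leq n+(\mathrm{des}(w)+1)^2,
\]
so the proper elements form a subset of $\Set{w\in S_n^B\mid \mathrm{inv}_D(w)\leq n+(\mathrm{des}(w)+1)^2}$. The number of proper elements is therefore at most the cardinality of that set, which by Proposition~\ref{prop:snb} is $2^nn!\,e^{-\gO(n)}$, completing the proof. I do not expect a genuine obstacle here: all of the concentration analysis has been absorbed into Proposition~\ref{prop:snb}, and the only content is the elementary set containment, whose single mild subtlety is remembering that $\mathrm{neg}(w)$ is nonnegative (so that $\ell_B$ dominates $\mathrm{inv}_D$) and that $d_B$ exceeds $\mathrm{des}$ by at most one.
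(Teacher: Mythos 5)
Your proposal is correct and follows essentially the same route as the paper's proof: both reduce properness to the inequality $\ell_B(w)\leq n+d_B(w)^2$ via Proposition~\ref{prop:maxwAnalysis}(ii), then use $d_B(w)\leq\mathrm{des}(w)+1$ and $\mathrm{inv}_D(w)\leq\mathrm{inv}_B(w)$ to embed the proper elements into the set bounded in Proposition~\ref{prop:snb}. The only cosmetic difference is that you spell out why $\mathrm{inv}_D(w)\leq\mathrm{inv}_B(w)$ (nonnegativity of $\mathrm{neg}(w)$), which the paper leaves implicit.
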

\begin{proof}
  We have that \[
      \begin{aligned}
        \#\Set{w\in S_n^B\mid w\text{ is proper}}
        &=\#\Set{w\in S_n^B\mid\mathrm{inv}_B(w)\leq n+(\mathrm{des}_B(w))^2} \\
        &\leq\#\Set{w\in S_n^B\mid\mathrm{inv}_B(w)\leq n+(\mathrm{des}(w)+1)^2} \\
        &\leq \#\Set{w\in S_n^B\mid\mathrm{inv}_D(w)\leq n+(\mathrm{des}(w)+1)^2} \\
        &\leq 2^nn!e^{-\gO(n)},
      \end{aligned}
  \]
where the first step follows by Proposition \ref{prop:maxwAnalysis}(ii), the second by $\mathrm{des}(w)+1\geq\mathrm{des}_B(w)$, the third by $\mathrm{inv}_D(w)\leq\mathrm{inv}_B(w)$, and the last by Proposition \ref{prop:snb}.
\end{proof}

\begin{corollary}
\label{cor:asymptoticB}
  The proportion of proper elements in the Coxeter group of type $B_{n}$
  vanishes as $n$ goes to infinity.
\end{corollary}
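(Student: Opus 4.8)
The plan is to divide the absolute count of proper elements, supplied by Proposition~\ref{prop:bn}, by the order of the group. Recall from Section~\ref{sec:combmodels} that the type $B_n$ Coxeter group is isomorphic to the group of signed permutations $S_n^B$, whose order is $2^n n!$. Hence, when $w$ is sampled uniformly at random, the proportion of proper elements is exactly
\[
  \mathop{\mathrm{Pr}}\limits_{w\gets S_n^B}[w \text{ is proper}]
  = \frac{\#\Set{w\in S_n^B\mid w\text{ is proper}}}{|S_n^B|}.
\]

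First I would apply Proposition~\ref{prop:bn} to bound the numerator by $2^n n!\cdot e^{-\gO(n)}$. Substituting $|S_n^B|=2^n n!$ in the denominator, the common factor of $2^n n!$ cancels, leaving the proportion bounded above by $e^{-\gO(n)}$. Since $e^{-\gO(n)}\to 0$ as $n\to\infty$, the proportion vanishes in the limit, which is exactly the assertion of the corollary.

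There is no genuine obstacle here: the corollary is an immediate consequence of Proposition~\ref{prop:bn}, mirroring verbatim the deduction of Corollary~\ref{cor:asymptoticA} from Proposition~\ref{prop:an}. All of the substantive work---the concentration estimates of Section~\ref{sec:prob} together with the martingale and union-bound arguments carried out in the proofs of Proposition~\ref{prop:snb} and Proposition~\ref{prop:bn}---has already been completed upstream. The only thing left to verify is that the leading factor $2^n n!$ in the upper bound of Proposition~\ref{prop:bn} coincides with the order of $S_n^B$, so that the ratio is genuinely subexponential in $n$; this matching is precisely what makes the cancellation clean and forces the limit to be zero.
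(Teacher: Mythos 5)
Your proof is correct and follows exactly the paper's own argument: apply Proposition~\ref{prop:bn}, divide by $|S_n^B| = 2^n n!$, and conclude that the proportion is at most $e^{-\gO(n)}$, which vanishes as $n \to \infty$. Nothing further is needed.
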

\begin{proof}
  Since $|S_n^B|=2^nn!$, it follows from Proposition \ref{prop:bn} that \[
    \mathop{\mathrm{Pr}}\limits_{w\gets S_n^B}[w \text{ is proper}]
    \leq e^{-\gO(n)},
  \]
  which tends to zero as $n$ goes to infinity.
\end{proof}

\begin{proposition}\label{prop:dn}
  The number of proper elements in the Coxeter group of type $D_n$ is at most
  $2^{n}n!e^{-\gO(n)}$.
\end{proposition}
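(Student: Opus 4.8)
The plan is to mirror the type $B$ argument from Proposition~\ref{prop:bn}: I will show that every proper element of $S_n^D$ already lies in the set bounded by Proposition~\ref{prop:snb}, so that the estimate follows at once. The entire point of Proposition~\ref{prop:snb} is that its bounding set was engineered to dominate the proper elements of \emph{both} type $B_n$ and type $D_n$ simultaneously, so all of the probabilistic content has been discharged and what remains is a short chain of combinatorial inequalities together with the inclusion $S_n^D\subset S_n^B$.

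First I would unwind properness in type $D_n$. An element $w\in S_n^D$ is proper exactly when $\ell_D(w)\leq n+{\sf maxw}_0(D_n,d_D(w))$, that is $\mathrm{inv}_D(w)\leq n+{\sf maxw}_0(D_n,d_D(w))$. The key structural observation is that ${\sf maxw}_0(D_n,x)\leq x^2$ for every integer $x\geq0$: by Proposition~\ref{prop:maxwAnalysis}(iii) one has ${\sf maxw}_0(D_n,x)=x^2-x\leq x^2$ when $x>3$, while ${\sf maxw}_0(D_n,x)=\binom{x+1}{2}=\tfrac{x(x+1)}{2}\leq x^2$ for $0\leq x\leq3$ (the inequality $\tfrac{x^2+x}{2}\leq x^2$ is just $x\leq x^2$, checked on the four values $x=0,1,2,3$). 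Hence properness of $w$ forces $\mathrm{inv}_D(w)\leq n+(d_D(w))^2$.

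Next I would replace $d_D(w)$ by the type $A$ descent statistic. Since $d_D(w)=\mathrm{des}_D(w)=\mathrm{des}(w)+\mathbbm1[\iv w(-2)>\iv w(1)]\leq\mathrm{des}(w)+1$, squaring gives $(d_D(w))^2\leq(\mathrm{des}(w)+1)^2$, so every proper $w\in S_n^D$ satisfies $\mathrm{inv}_D(w)\leq n+(\mathrm{des}(w)+1)^2$. Combining this with $S_n^D\subset S_n^B$ yields
\[
  \#\Set{w\in S_n^D\mid w\text{ is proper}}
  \leq \#\Set{w\in S_n^B\mid \mathrm{inv}_D(w)\leq n+(\mathrm{des}(w)+1)^2}
  \leq 2^nn!\,e^{-\gO(n)},
\]
where the last inequality is exactly Proposition~\ref{prop:snb} (and the slack factor of $2$ relative to $|S_n^D|=2^{n-1}n!$ is absorbed into the $\gO(n)$).

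There is essentially no hard step here: all of the concentration machinery lives in Proposition~\ref{prop:snb}, and the type $D$ case demands only the bookkeeping above. The one place that warrants a moment's care is the uniform bound ${\sf maxw}_0(D_n,x)\leq x^2$ — in particular confirming that the small-$x$ branch $\binom{x+1}{2}$ never exceeds $x^2$ — since this is precisely what allows the type $B$ target set of Proposition~\ref{prop:snb} to absorb the type $D$ proper elements despite the differing formula for ${\sf maxw}_0$.
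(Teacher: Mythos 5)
Your proof is correct and follows essentially the same route as the paper's: unwind the type $D$ definition of proper, bound ${\sf maxw}_0(D_n,x)\leq x^2$ via Proposition~\ref{prop:maxwAnalysis}(iii), replace $\mathrm{des}_D(w)$ by $\mathrm{des}(w)+1$, use $S_n^D\subset S_n^B$, and invoke Proposition~\ref{prop:snb}. The only difference is cosmetic: you explicitly verify the small-$x$ branch $\binom{x+1}{2}\leq x^2$ for $x\leq 3$, which the paper leaves implicit in its citation of Proposition~\ref{prop:maxwAnalysis}(iii).
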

\begin{proof}
  We have that \[
      \begin{aligned}
        \#\Set{w\in S_n^D\mid w\text{ is proper}}
        &=\#\Set{w\in S_n^D\mid\mathrm{inv}_D(w)\leq n+{\sf maxw}_0(S_n^D,\mathrm{des}_D(w)))} \\
        &\leq \#\Set{w\in S_n^D\mid\mathrm{inv}_D(w)\leq n+(\mathrm{des}_D(w))^2} \\
        &\leq\#\Set{w\in S_n^D\mid\mathrm{inv}_D(w)\leq n+(\mathrm{des}(w)+1)^2} \\
        &\leq \#\Set{w\in S_n^B\mid\mathrm{inv}_D(w)\leq n+(\mathrm{des}(w)+1)^2} \\
        &\leq 2^nn!e^{-\gO(n)},
      \end{aligned}
  \]
  where the first step follows by the definition of proper, the second by Proposition \ref{prop:maxwAnalysis}(iii), the third by $\mathrm{des}(w)+1\geq\mathrm{des}_D(w)$, the fourth by $S_n^D\!\subset\!S_n^B$, and the last by Proposition \ref{prop:snb}.
\end{proof}

\begin{corollary}
\label{cor:asymptoticD}
  The proportion of proper elements in the Coxeter group of type $D_{n}$
  vanishes as $n$ goes to infinity.
\end{corollary}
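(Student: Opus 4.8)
The plan is to mirror the proofs of Corollaries~\ref{cor:asymptoticA} and~\ref{cor:asymptoticB}, which reduce the asymptotic statement to the counting bound already established. Recall from Figure~\ref{fig:coxG} and Section~\ref{sec:combmodels} that the type $D_n$ Coxeter group has group structure $S_n^D$ with $|S_n^D| = 2^{n-1} n!$. The single substantive input is Proposition~\ref{prop:dn}, which bounds the number of proper elements in type $D_n$ by $2^{n} n!\, e^{-\gO(n)}$.

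First I would express the proportion of proper elements as the ratio of the count of proper elements to the group order, and then apply Proposition~\ref{prop:dn} in the numerator. Dividing the bound by $|S_n^D| = 2^{n-1} n!$ gives that, when $w$ is sampled uniformly at random from $S_n^D$,
\[
  \mathop{\mathrm{Pr}}\limits_{w\gets S_n^D}[w \text{ is proper}]
  \leq \frac{2^{n} n!\, e^{-\gO(n)}}{2^{n-1} n!}
  = 2\, e^{-\gO(n)}
  = e^{-\gO(n)},
\]
where the constant factor $2$ is absorbed into the exponential rate. Since $e^{-\gO(n)} \longrightarrow 0$ as $n \longrightarrow \infty$, the proportion vanishes in the limit, which is exactly the claim.

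There is essentially no obstacle to overcome here, as all the work is done by Proposition~\ref{prop:dn}; the one point worth flagging is purely bookkeeping. The upper bound supplied by Proposition~\ref{prop:dn} carries a factor $2^{n}$ rather than the $2^{n-1}$ appearing in $|S_n^D|$, so dividing produces a harmless constant factor of $2$. This constant does not affect the $\gO(n)$ rate in the exponent and hence does not change the conclusion that the proportion tends to zero, so no additional care is needed.
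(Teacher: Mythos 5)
Your proposal is correct and follows exactly the paper's own argument: both divide the count bound from Proposition~\ref{prop:dn} by $|S_n^D| = 2^{n-1}n!$ and absorb the resulting constant factor of $2$ into the $e^{-\gO(n)}$ term. The extra bookkeeping remark about $2^n$ versus $2^{n-1}$ is a fine (if implicit in the paper) observation, but the proofs are the same.
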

\begin{proof}
  Since $|S_n^D|=2^{n-1}n!$, it follows from Proposition \ref{prop:dn} that \[
    \mathop{\mathrm{Pr}}\limits_{w\gets S_n^D}[w \text{ is proper}]
    \leq e^{-\gO(n)},
  \]
  which tends to zero as $n$ goes to infinity.
\end{proof}

\subsection{Type I}
The type $I_2(n)$ Coxeter group is presented with generators $S=\Set{s_1,s_2}$
which satisfy $s_1^2=s_2^2=(s_1s_2)^n=e$.
\begin{proposition}
\label{prop:asymptoticI}
  Let $W_n$ be a Coxeter group of type $I_2(n)$
  and let $w$ be drawn from $W_n$ uniformly at random.
  Then $\Pr{w\text{ is proper}}\longrightarrow0$ as
  $n\longrightarrow\infty$.
\end{proposition}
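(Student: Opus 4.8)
The plan is to compute the number of proper elements in $I_2(n)$ exactly; it will turn out to be a constant for all large $n$, whereupon the probability $8/(2n)$ visibly vanishes. The essential observation is that $I_2(n)$ is the dihedral group of order $2n$ but has rank $2$, so the symbol ``$n$'' appearing in Definition~\ref{def:main} is the constant $2$, \emph{not} the dihedral parameter. I would begin by recording the length distribution: there is one element of length $0$ (the identity), exactly two elements of each length $k$ with $1 \le k \le n-1$ (the alternating words $s_1 s_2 s_1 \cdots$ and $s_2 s_1 s_2 \cdots$ of length $k$), and a unique longest element $w_0$ of length $n$. This accounts for all $1 + 2(n-1) + 1 = 2n$ elements.

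Next I would determine the left-descent number $d(w)$ for each element. The identity has $d(e) = 0$. An element $w$ of length $1 \le k \le n-1$ has a unique reduced word, say beginning with $s_i$; left-multiplying by $s_i$ cancels and shortens it, while left-multiplying by the other generator lengthens it (since $\ell(w) < n$), so exactly one generator is a left descent and $d(w) = 1$. For the longest element, both $s_1 w_0$ and $s_2 w_0$ have length $n-1$, so $d(w_0) = 2$.

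Substituting rank $2$ and the values of ${\sf maxw}_0(I_2(n), x)$ from Proposition~\ref{prop:maxwAnalysis}(vii) into Definition~\ref{def:main}, the proper condition reads $\ell(w) \le 2$ when $d(w) = 0$, reads $\ell(w) \le 3$ when $d(w) = 1$, and reads $\ell(w) \le n + 2$ when $d(w) = 2$. The first and third are automatic for $e$ and $w_0$ respectively, while the middle case is satisfied precisely for lengths $1$, $2$, and $3$. Hence for every $n \ge 4$ the proper elements are exactly the identity, the six elements of lengths $1$, $2$, $3$, and $w_0$ --- a total of $8$, independent of $n$. The sought probability is therefore $8/(2n) = 4/n \longrightarrow 0$ as $n \longrightarrow \infty$.

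There is no serious obstacle here: the content is a finite bookkeeping once the element list and descent counts are in hand, and no concentration machinery from Section~\ref{sec:prob} is needed. The only two points demanding care are disentangling the two roles of $n$ (the rank is the constant $2$, not the dihedral parameter) and verifying that every short element has exactly one descent; both follow immediately from the alternating-word description of dihedral elements.
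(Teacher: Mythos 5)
Your proposal is correct and follows essentially the same route as the paper: case analysis on $d(w)\in\{0,1,2\}$, substitution of the values of ${\sf maxw}_0(I_2(n),\cdot)$ from Proposition~\ref{prop:maxwAnalysis}(vii) into Definition~\ref{def:main}, and the observation that the resulting set of proper elements ($e$, $w_0$, and the elements of length at most $3$) has constant size, so the probability is $O(1/n)$. Your write-up is in fact slightly more explicit than the paper's, pinning down the exact count of $8$ proper elements for $n\geq 4$ and justifying the descent counts via the uniqueness of reduced words, where the paper simply asserts them.
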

\begin{proof}
  We know that $W_n$ has order $2n$.
  Further, for $w\in W_n$, \[
    d(w)=
    \begin{cases}
      2& \text{if }w=w_0, \\
      0& \text{if }w=e, \\
      1&\text{otherwise}.
    \end{cases}
  \]
  \begin{itemize}
    \item
      If $w=w_0$, then ${\sf maxw}_0(W_n, d(w))={\sf maxw}_0(W_n, 2)=n$.
      Thus, $w$ is proper if and only if $\ell(w)\leq 2+n$,
      which is true for all $n$ since $\ell(w_0)=n$.
    \item
      If $w=e$, then ${\sf maxw}_0(W_n, d(w))={\sf maxw}_0(W_n, 0)=0$.
      Thus $w$ is proper if and only if $\ell(w)\leq2$,
      which is true since $\ell(e)=0$.
    \item
      If $w\neq e,w_0$, then ${\sf maxw}_0(W_n, d(w))={\sf maxw}_0(W_n, 1)=1$.
      Thus, $w$ is proper if and only if $\ell(w)\leq3$.
      There are at most a constant number of $w\in W_n$ with length at most 3:
      $w=s_1,s_2,s_1s_2,s_2s_1,s_1s_2s_1$, and $s_2s_1s_2$.
  \end{itemize}
  Thus the number of proper elements in $W_n$ is constant with respect to $n$.
\end{proof}

\subsection{Proof of Theorem~\ref{thm:main1}} Theorem ~\ref{thm:main1} now follows immediately from Corollaries~\ref{cor:asymptoticA}, \ref{cor:asymptoticB}, and \ref{cor:asymptoticD} and Proposition~\ref{prop:asymptoticI}.

\section{Lower bounds}
\label{sec:lowerbounds}
\subsection{Types $A_{n-1}$, $B_n$, and $D_n$} We have given upper bounds on the number of proper elements in
each of the infinite families of finite irreducible Coxeter groups.
Next, we give a non-trivial lower bound on the number of proper elements
for families $A_{n-1}$, $B_n$, and $D_n$.
In order to show these lower bounds,
we construct a subset of $S_n$ such that
all elements of the set are proper.
Further, we can use this subset to create
subsets of $S_n^B$ and $S_n^D$
that contain only proper elements.
This will give us a lower bound on the number of elements
in each of the groups $A_{n-1}$, $B_n$, and $D_n$.

The next lemma gives mappings that, when applied to each
element of a set of proper elements of $S_n$,
will result in a set that only contain proper elements of $S_n^B$ and a set that only contain proper elements of $S_n^D$.

\begin{lemma}\label{lemma:containment}
  Let $\fun{\gf_B}{S_n}{S_n^B}$ and $\fun{\gf_D}{S_n}{S_n^D}$
  be the homomorphisms $w\mapsto w'$ where $w'(i)=w(i)$ and $w'(-i)=-w(i)$ for
  each $i\in[n]$.
  For $n$ large enough and for all $w\in S_n$,
  if $w$ is proper then $\gf_B(w)$ and $\gf_D(w)$ are proper.
\end{lemma}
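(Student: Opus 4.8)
The plan is to show that the embedding $\varphi_B$ (and likewise $\varphi_D$) preserves properness by tracking exactly how each of the relevant Coxeter statistics transforms under $w \mapsto w'$, and then verifying that the defining inequality for properness in type $A$ forces the corresponding inequality in type $B$ (resp.\ $D$). First I would compute the length and descent statistics of $w' = \varphi_B(w)$ directly from the one-line notation. Since $w'(i) = w(i)$ and $w'(-i) = -w(i)$ with all of $w(1),\ldots,w(n)$ positive, the map introduces no negative values, so $\mathrm{neg}(w') = 0$ and the negative-sum-pair count $\mathrm{nsp}(w')$ counts only those pairs $i<j$ with $w(i)+w(j) < 0$, which is zero as well. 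Hence $\mathrm{inv}_B(w') = \mathrm{inv}(w') = \mathrm{inv}(w) = \ell_A(w)$, so $\ell_B(\varphi_B(w)) = \ell_A(w)$. The same computation gives $\ell_D(\varphi_D(w)) = \ell_A(w)$ since $\mathrm{inv}_D = \mathrm{inv} + \mathrm{nsp}$.

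Next I would compare descent sets. Because $\varphi_B$ sends $w^{-1}$ to $(w')^{-1}$ with $(w')^{-1}(1) = w^{-1}(1) > 0$, the extra Iverson term $\mathbbm{1}[0 > (w')^{-1}(1)]$ in $d_B$ vanishes, giving $d_B(\varphi_B(w)) = \mathrm{des}(w) = d_A(w)$. For type $D$ one checks that $d_D(\varphi_D(w)) = \mathrm{des}(w) + \mathbbm{1}[(w')^{-1}(-2) > (w')^{-1}(1)]$, so the descent count is either $d_A(w)$ or $d_A(w)+1$; in either case $d_D(\varphi_D(w)) \geq d_A(w)$. The upshot is that both embeddings preserve length exactly and do not decrease the number of descents.

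With these identities in hand, the properness inequality for $w$, namely $\ell_A(w) \leq (n-1) + \binom{d_A(w)+1}{2}$, must be shown to imply the corresponding inequality $\ell_B(w') \leq n + {\sf maxw}_0(B_n, d_B(w'))$ and its type-$D$ analogue. Using $\ell_B(w') = \ell_A(w)$, $d_B(w') = d_A(w)$, and Proposition~\ref{prop:maxwAnalysis}, the type-$B$ bound becomes $\ell_A(w) \leq n + d_A(w)^2$; since $\binom{d_A(w)+1}{2} \leq d_A(w)^2$ for all nonnegative integers and $n-1 \leq n$, this follows immediately. For type $D$ one invokes the piecewise formula from Proposition~\ref{prop:maxwAnalysis}(iii) together with the monotonicity of ${\sf maxw}_0(D_n, \cdot)$ and the inequality $d_D(w') \geq d_A(w)$, comparing $\binom{d_A(w)+1}{2}$ against ${\sf maxw}_0(D_n, d_D(w'))$ in the two regimes $x \leq 3$ and $x > 3$.

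The main obstacle will be the type-$D$ case, specifically the small-descent regime: when $d_A(w) \leq 3$ the quantity ${\sf maxw}_0(D_n, x)$ equals $\binom{x+1}{2}$ rather than $x^2 - x$, so the slack between the type-$A$ and type-$D$ bounds is thin, and I must use the descent-increase $d_D(w') \geq d_A(w)$ carefully rather than relying on a crude quadratic comparison. This is presumably why the statement is qualified by \emph{``for $n$ large enough,''} and I would isolate exactly which descent values require that hypothesis, handling the boundary cases $x=3 \mapsto x=4$ (where the formula switches branches) explicitly.
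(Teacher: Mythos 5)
Your proposal is correct and takes essentially the same route as the paper: both establish $\ell_B(\gf_B(w))=\ell_D(\gf_D(w))=\ell_A(w)$ and that the embeddings do not decrease descents, then reduce properness to comparing $\binom{x+1}{2}={\sf maxw}_0(A_{n-1},x)$ against ${\sf maxw}_0(B_n,x)=x^2$ and against the piecewise formula for ${\sf maxw}_0(D_n,x)$. The one simplification you missed is that the type-$D$ Iverson bracket always vanishes, since $(\gf_D(w))^{-1}(-2)=-w^{-1}(2)<0<w^{-1}(1)=(\gf_D(w))^{-1}(1)$, so $d_D(\gf_D(w))=d_A(w)$ exactly; consequently the small-descent regime you flag as the main obstacle is trivial, because for $x\leq 3$ the two ${\sf maxw}_0$ values coincide and the single unit of slack between rank $n-1$ (type $A_{n-1}$) and rank $n$ (type $D_n$) already gives the inequality, with no boundary issue at $x=3$.
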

\begin{proof}
  We have
  \begin{equation}
  \label{eq:contain1}
  \ell_B(\gf_B(w))=\ell_D(\gf_D(w))=\ell_A(w)
  \end{equation}
  since $w(i)>0$ for each $i\in[n]$, by the definitions of $\mathrm{inv}$, $\mathrm{inv}_B$, and $\mathrm{inv}_D$.
  Further, $\iv{(\gf_B(w))}=\gf_B(\iv w)$ and $\iv{(\gf_D(w))}=\gf_D(\iv w)$ implies that
  \begin{equation}
  \label{eq:contain2}
  d_B(\gf_B(w))=d_D(\gf_D(w))=d_A(w).
  \end{equation}
  If $w\in S_n$ is proper, then $\ell_A(w)\leq (n-1)+\binom{d_A(w)+1}{2}\leq n+m_A(w)$ by Proposition \ref{prop:maxwAnalysis}.
  Thus, since ${\sf maxw}_0(A_{n-1}, d_A(w))\leq \min\Set{{\sf maxw}_0(B_n, d_B(\gf_B(w))),{\sf maxw}_0(D_n, d_D(\gf_D(w)))}$ for large enough $n$, \eqref{eq:contain1}, \eqref{eq:contain2}, and Proposition \ref{prop:maxwAnalysis} imply our result.
\end{proof}

Next, we give a construction of a subset of elements of $S_n$
that, with the optimized parameters, are all proper.
To construct an element of $S_n$,
we first imagine we have a row of $n$ empty cells.
We group the cells into $a$ contiguous \textit{chunks} of $q$ cells.
There might be some $r < q$ cells leftover on the rightmost side of the row.
Let us focus on one chunk and denote this chunk $\alpha$.
We will place the numbers $1,\ldots,q$ into chunk $\alpha$ such that
there are many descents.
More specifically, we group the numbers $1,\ldots,q$ into $s$ contiguous
(when the numbers are arranged in increasing order)
\textit{runs}
of size $b$.
There might be $d<b$ remaining larger numbers.
We will focus on one run of $1,\ldots,q$ and call this run $\beta$.
We place the elements of run $\beta$ into the empty cells of chunk $\alpha$
such that when looking at the cells from left to right, the elements
of $\beta$ are in reverse (decreasing) order.
We do this for each run $\beta$ and place the excess elements of $\beta$
in the remaining empty cells arbitrarily.
Then we add $\alpha\cdot q$ to each cell entry of chunk $\alpha$ so that globally we do not have any repeated numbers in the cells.
We repeat this process for all chunks.
The remaining empty cells are filled in increasing order with the remaining elements of $[n]$ that
have not been used.

The following is a formalization of the above intuition.

\begin{construction}\label{construction}
  Let $q,s\in \Z_{\geq0}$ be such that $n=aq+r$ for some unique $a\in\Z$,
  $0\leq r<q$, and $q=sb+d$ for some $b\in\Z$, $0\leq d < s$.
  Let $\gp_0,\ldots,\gp_{a-1}\in S_q$ with the property that \[
    \iv{\gp_j}(st+1) > \iv{\gp_j}(st+2) > \cdots > \iv{\gp_j}(st+s)
  \]
  for each $t=0,\ldots,b-1$ and $j=0,\ldots,a-1$.
  Now set $w\in S_n$ to be such that for $i=\ga q+\gb\in[n]$ with $\ga\in\Z$ and
  $\gb\in[q]$, \[
    w(i) = \ga q +
    \begin{cases}
      \gp_\ga(\gb) &\text{if } \ga < a, \\
      \gb          &\text{otherwise}.
    \end{cases}
  \]
  Let $P_n$ be the set of these constructed permutations on $[n]$.
\end{construction}

We note that the above construction results in proper elements of $S_n$ only when
the chunk sizes and runs are optimized correctly.
We give some examples of this construction.

\begin{example}
  Some examples in one-line notation are listed below.
  \begin{enumerate}
    \item $w=2143\,6587\in P_n$ with $n=8$, $q=4$, $s=2$ and
          $a=2$, $r=0$, $b=2$, $d=0$.
          In this example, $\gp_0=\gp_1=2143$.
    \item $w=2431\,6587\,9\in P_n$ with $n=9$, $q=4$, $s=2$ and
          $a=2$, $r=1$, $b=2$, $d=0$.
          In this example, $\gp_0=2431$ and $\gp_1=2143$.
    \item $w=453261\,789\in P_n$ with $n=9$, $q=6$, $s=4$ and
          $a=1$, $r=3$, $b=1$, $d=2$.
          In this example, $\gp_0=453261$.
  \end{enumerate}
\end{example}

By way of Construction~\ref{construction}, we force the number of descents to be somewhat large within each chunk (and thus globally), and we force the inversions to occur only within each chunk which makes the number of inversions somewhat small.
In the following theorem, we show that there always exists chunk and run sizes that result in $P_n$ containing only proper elements of $S_n$. Further, we give lower bounds for the sizes of $P_n$
and the corresponding constructed sets for $S_n^B$ and $S_n^D$.

\begin{theorem}
\label{thm:lowerBound}
  The number of proper elements of $S_n$, $S_n^D$, and $S_n^B$ is at least
  $(cn)^n$ for some absolute constant $c>0$.
\end{theorem}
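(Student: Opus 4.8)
The plan is to use the family $P_n$ from Construction~\ref{construction} as an explicit supply of proper elements of $S_n$, and then push it into $S_n^B$ and $S_n^D$ through Lemma~\ref{lemma:containment}. I would fix the two free parameters of the construction to be $s=2$ and $q=\lfloor n/4\rfloor$; for all large $n$ this forces $a=4$ and a bounded remainder $r=n\bmod 4\le 3$, together with $b=\lfloor q/2\rfloor$ and $d=q\bmod 2\in\{0,1\}$. Thus $P_n$ consists of the block-diagonal permutations built from a $4$-tuple $(\gp_0,\dots,\gp_3)$ of elements of $S_q$ whose inverses are strictly decreasing on each of the $b$ consecutive pairs of positions $\{2t+1,2t+2\}$.

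First I would read off the two relevant statistics of $w\in P_n$ from its block structure. Because $w$ carries the $\ga$-th length-$q$ block to itself and sends smaller blocks to smaller values, every inversion of $w$ lies inside a single block, so $\mathrm{inv}(w)=\sum_{\ga=0}^{3}\mathrm{inv}(\gp_\ga)\le 4\binom{q}{2}$. Dually, $\iv w$ is block-diagonal with $\ga$-th block $\iv{\gp_\ga}$, no descent of $\iv w$ occurs at a block boundary, and the defining condition makes $\iv{\gp_\ga}$ decrease across each of its $b$ prescribed pairs; counting one descent per pair gives $\mathrm{des}(w)\ge 4b$.

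Next I would verify properness and then count. Since $S_n$ has type $A_{n-1}$ (rank $n-1$), the definition of proper together with Proposition~\ref{prop:maxwAnalysis}(i) says $w$ is proper exactly when $\mathrm{inv}(w)\le(n-1)+\binom{\mathrm{des}(w)+1}{2}$. Feeding in $\mathrm{inv}(w)\le 4\binom{q}{2}$ and $\mathrm{des}(w)\ge 4b$ with $b\ge(q-1)/2$ and $q\le n/4$, a short computation reduces the required inequality to $2q-2b\le n+1$, which holds for every $n$; hence all of $P_n$ is proper. For the count, distinct tuples yield distinct permutations, and the number of $\gp\in S_q$ whose inverse decreases on each of the $b$ disjoint pairs is $q!/2^{b}$ (each pair is an independent order constraint satisfied by half of all permutations). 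Therefore $|P_n|=(q!/2^{b})^{4}$, and a Stirling estimate gives $\log|P_n|=4\log q!-4b\log 2=4q\log q-O(n)=n\log n-O(n)$, using $4q=n-r=n-O(1)$ and $\log q=\log n-O(1)$. This yields $|P_n|\ge (cn)^n$ for an absolute constant $c>0$ (after shrinking $c$ to absorb the finitely many small $n$). Finally, $\gf_B$ and $\gf_D$ of Lemma~\ref{lemma:containment} are injective and, for large $n$, send proper elements of $S_n$ to proper elements of $S_n^B$ and $S_n^D$; the images $\gf_B(P_n)$ and $\gf_D(P_n)$ thus give the same lower bound in types $B$ and $D$.

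I expect the main obstacle to be the joint calibration of block size against properness. The count $(q!/2^{b})^{a}$ is largest when $q$ is as large as possible, i.e.\ when the number of blocks $a$ is as small as possible; but with only $b\approx q/2$ forced descents per block, the properness inequality $\mathrm{inv}(w)\lesssim\binom{\mathrm{des}(w)+1}{2}$ becomes $a\binom{q}{2}\lesssim\binom{ab+1}{2}$, which fails unless $a\ge 4$. The value $a=4$ is exactly where these competing demands meet, so the key insight is that a \emph{constant} number of \emph{linear-size} blocks is what makes $\log|P_n|$ reach the leading term $n\log n$ while preserving properness. The remaining care is to confirm that the bounded leftovers $r$ and $d$ neither spoil the properness inequality nor erode that leading term; everything past that point is routine bookkeeping.
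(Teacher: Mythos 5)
Your proof is correct and takes essentially the same route as the paper: the paper also builds $P_n$ from Construction~\ref{construction}, verifies properness from the two worst-case bounds $\mathrm{inv}(w)\le a\binom{q}{2}$ and $\mathrm{des}(w)\ge a(s-1)b$, counts at least $(q!)^a/(s!)^{ab}$ elements via Stirling-type estimates, and transfers the bound to $S_n^B$ and $S_n^D$ through Lemma~\ref{lemma:containment}. The only real difference is parametric: the paper takes $a=10$, $s=5$, which leaves generous slack in the properness inequality, whereas your choice $a=4$, $s=2$ is borderline at leading order ($\mathrm{inv}$ and $\binom{\mathrm{des}+1}{2}$ both $\approx n^2/8$) but, as your exact computation of the lower-order terms confirms, still goes through.
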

\begin{proof}
    We will show there
    exist $q,s,a$, and $b$ from Construction~\ref{construction} such that each element of $P_n$ is proper.
    We can assume $n$ is sufficiently large, since for smaller $n$, we can increase $c$ such that the statement holds.
    Let $a=10$, $s=5$, $q=\Floor{n/a}$, $b=\Floor{q/s}$ such that $q,b>0$ for sufficiently large $n$.
    Fix $w\in P_n$.
    Since inversions of $w$ only take place within each of the $a$ chunks,
    we have $\ell_A(w)\leq a\binom{q}2\leq aq^2/2 \leq 10\cdot (n/10)^2/2 = n^2/20$. 
    Also, in each chunk of $w$, there are at least $s$ runs of size $b$ placed in descending order (and not necessarily consecutively);
    this means $d_A(w)\geq asb$.
    Thus $\binom{d_A(w)+1}2\geq (d_A(w))^2/2\geq (asb)^2/2 \geq (50b)^2/2 \geq (n-60)^2/2$.
    This means that for $n$ sufficiently large, $\ell_A(w) \leq n + \binom{d_A(w)+1}2$ and hence $w$ is proper.
    
    Further, we have that $|P_n|\geq\frac{(q!)^a}{(s!)^{ab}}$
    by the constraints of Construction~\ref{construction}.
  For $\e>0$ and sufficiently large $n$,
  \begin{align*}
    \frac{(q!)^a}{(s!)^{ab}}
    &\geq\frac{(\Floor{n/a}!)^{a}}{(s!)^{n/s}} \\
    &\geq\left(\frac{\Floor{n/a}^{\Floor{n/a}}}{e^{\Floor{n/a}-1}}\right)^{a}%
      \cdot\left(\frac{e^{s-1}}{s^{s+1}}\right)^{n/s} \numberthis \label{ineq:knuth} \\
    &\geq \left(\frac{n}{(a+\e)\cdot e}\right)^{n-a}%
      \cdot\left(\frac{e^{s-1}}{s^{s+1}}\right)^{n/s}\cdot e^{-a} \numberthis \label{ineq:floor} \\
    &=(\gamma_n \cdot n)^n,
  \end{align*}
  where \[
    \gamma_n = \inv{(a+\e)\cdot e^{1/s}\cdot s^{1+1/s}}\cdot\gn_n
    \quad \text{and}\quad
    \gn_n=\left(\frac{a+\e}{n}\right)^{a/n}.
  \]
  Step \eqref{ineq:knuth} uses the bound \[
    n^n/e^{n-1}\leq n!\leq n^{n+1}/e^{n-1}
  \] from \S 1.2.5 Exercise 24 of \cite{K97},
  and step \eqref{ineq:floor} uses the bound
  $\Floor{n/a}\geq n/a-1\geq \frac{n}{a+\e}$ for $n\geq a\cdot(1+a/\e)$. 
  Because $\log \gn_n \geq -\frac{a}{(a+\e)\cdot e}$ by elementary calculus,
  there is some absolute constant $c>0$ such that $\gamma_n \geq c$.

  This is a non-trivial lower bound for the number of proper elements of $S_n$.
  The rest of the proof follows from Lemma \ref{lemma:containment}.
\end{proof}

\section{The exceptional groups}
\label{sec:exceptional}
\subsection{Types $E_6$, $E_7$, $E_8$, $F_4$, $H_3$, and $H_4$}
The Coxeter groups of types $E_6$, $E_7$, $E_8$, $F_4$, $H_3$, and $H_4$ are finite families.
Figure \ref{fig:finite-families} lists the number of proper elements for each of these groups.
Using a computer we computed all of their proper elements \cite{B21}.
The algorithm we used is described below.

\begin{enumerate}
\item \label{alg:gen1}(Generating the Coxeter group elements by length).
Let $W=\langle s_1,\ldots,s_n\rangle$ be a finite Coxeter group and let $l \in \Set{0,\ldots, \ell(w_0(W))}$.
At step $t$ of the algorithm, we maintain a set $Z_t \subseteq W$ with $Z_0=\Set{e}$.
Then $Z_{t+1} := \Set{s_iw\mid w\in Z_t, s_i\in [n]\setminus J(w)}$.
We terminate at the first $T$ such that $Z_{T+1} = \emptyset$.
Then for $Z := \cup_{t=0}^{T} Z_t$, we collect the elements $Y := \Set{w\in Z\mid \ell(w)=l}$.

\item \label{alg:gen2}(Storing the Coxeter group elements by length).
The elements of $Y$ are written to a named file (e.g. \texttt{data/E8/23.txt}, where $l=23$ in this example).
Each line of this file is a reduced word of $W$ with length $l$, represented by the ordered indices of the generators.
Then we compress the directory for each Coxeter group name.  

\item\label{alg:gen3} (Computing the number of proper elements by length).
For each Coxeter group, we (in parallel) extract each of its files and compute the the number of proper elements. Then we take the sum of these results.
\end{enumerate}

The computation for $E_8$ took the longest: it took approximately three weeks to run steps \eqref{alg:gen1} and \eqref{alg:gen2}.
Step \eqref{alg:gen3} took approximately 75 minutes to run.
We used a 3.0GHz 5th Gen Intel Core i7-5500U processor with 1TB of disk space and 16GB of RAM. 
Steps \eqref{alg:gen1} and \eqref{alg:gen3} heavily utilize \texttt{SageMath} and \texttt{Coxeter3} libraries
for most Coxeter group operations.

\begin{figure}
  \renewcommand{\arraystretch}{1.55}
  \begin{tabular}{cc}
    Type & Number of proper elements \\
    \hline
    $E_6$ & 10690 \\
    $E_7$ & 159368 \\
    $E_8$ & 4854344 \\
    $F_4$ & 297 \\
    $H_3$ & 48 \\
    $H_4$ & 594
  \end{tabular}
  \caption{The number of proper elements for the finite families were calculated using the aid of a computer \cite{B21}.}
  \label{fig:finite-families}
\end{figure}

\section*{Acknowledgements}
We thank Alex Yong for helpful discussions.
We thank the anonymous referees for their helpful remarks on the organization of this paper.
The project was completed as part of the ICLUE
(Illinois Combinatorics Lab for Undergraduate Experience) program,
which was funded by the NSF RTG grant DMS 1937241.
Research was partially supported by NSF Grant DMS 1764123,
NSF RTG grant DMS 1937241,
Arnold O. Beckman Research Award (UIUC Campus Research Board RB 18132),
the Langan Scholar Fund (UIUC), and the Simons Fellowship.
Part of this work was completed while RH was a postdoc at the University of Illinois at Urbana-Champaign.
RH was partially supported by an AMS Simons Travel grant.

\section*{Conflicts of interest} The authors declare no conflicts of interest.

\end{document}